\documentclass[11pt,a4paper]{article}%
\usepackage[centertags]{amsmath}
\usepackage{amsfonts}
\usepackage{amssymb}
\usepackage{amsthm}
\usepackage{epsfig}
\usepackage{setspace}
\usepackage{ae}
\usepackage{eucal}
\usepackage[usenames]{color}%
\setcounter{MaxMatrixCols}{30}%
\usepackage{graphicx}

\theoremstyle{plain}
\newtheorem{thm}{Theorem}[section]
\newtheorem{lem}[thm]{Lemma}

\theoremstyle{definition}

\theoremstyle{remark}

\newtheorem{rem}[thm]{Remark}

\renewcommand{\div}{\operatorname{div}}

\begin{document}

\title{ On the multivariate Burgers equation and the incompressible Navier-Stokes equation (Part I) }
\author{J\"org Kampen }
\maketitle

\begin{abstract} 
We provide a constructive  global existence proof for the multivariate viscous Burgers equation system defined on the whole space or on a domain isomorphic to the $n$-torus and with time horizon up to infinity and $C^{\infty}$- data (satisfying some growth conditions if the problem is posed on the whole space). The proof is by a time discretized semi-explicit perturbative expansion in transformed coordinates where the convergence is guaranteed by certain a priori estimates. The scheme is useful in order to define computation schemes for related equation systems of fluid dynamics.
\end{abstract}

\footnotetext[1]{Weierstrass Institute for Applied Analysis and Stochastics,
Mohrenstr.39, 10117 Berlin, Germany. Support by DFG Matheon is ackknowledged.
\texttt{{kampen@wias-berlin.de}}.}

2000 Mathematics Subject Classification. 35K40, 35Q30.
\section{Introduction}
In order to understand the dynamics of fluids we need to understand equations of the form
\begin{equation}\label{qparasyst1}
\left\lbrace \begin{array}{ll}
\frac{\partial u_i}{\partial t}=\nu\sum_{j=1}^n \frac{\partial^2 u_i}{\partial x_j^2} 
-\sum_{j=1}^n u_j\frac{\partial u_i}{\partial x_j},\\
\\
\mathbf{u}(0,.)=\mathbf{h},
\end{array}\right.
\end{equation}
 where $\nu$ is some strictly positive constant (i.e. $\nu>0$) and $1\leq i\leq n$. Global existence for such equations can be obtained by standard methods, e.g. by an upper bound estimate for the solution with respect to the maximum norm (cf. the estimate (\ref{aprioriest}) below). In this paper we look for a constructive global solution scheme for $\mathbf{u}=(u_1,\cdots,u_n)$ on some domain $[0,\infty)\times \Omega$, where $[0,\infty)$ is some time interval and $\Omega \subseteq \mathbb{R}^n$, and where some initial data $\mathbf{h}=(h_1,\cdots,h_n)^T$ with $h_i\in C^ {\infty}\left(\Omega\right) $ are given. Special interest is either in the case $\Omega={\mathbb R}^n$ or in the case of periodic boundary conditions, i.e. solutions of the form $u_i:[0,\infty)\times {\mathbb R}^n\rightarrow {\mathbb R},~1\leq i\leq n$ which satisfy the condition $u_i(t,x)=u_i(t,x+\gamma)$ for $\gamma\in \Gamma$ with some lattice $\Gamma$, say $\Gamma={\mathbb Z}^n$. The latter periodic functions define solutions on the quotient space $[0,\infty)\times {\mathbb R}^n/{\mathbb Z}^n$ in a natural way, and the spatial part of the latter is isomorphic to the $n$-torus ${\mathbb T}^n$. Alternatively, one may look at the restriction of periodic functions of the kind above to a reference domain $\Omega=\Omega_0$, where 
\begin{equation}
\Omega_0=\left\lbrace x\in {\mathbb R}^n|0\leq x_j< 1,~~1\leq j\leq n\right\rbrace,
\end{equation}
and then identify according to 
\begin{equation}\label{per}
u_i(t,x)=u_i(t,x+\gamma)~~\mbox{ if } ~~x,x+\gamma\in \partial\Omega_0~~\mbox{ and }\gamma\in \Gamma,
\end{equation}
where, $\partial \Omega_0$ denotes the boundary of $\Omega_0$. We shall  speak loosely of the Cauchy problem on the $n$-torus $\Omega ={\mathbb T}^n$. Clearly, the data $\mathbf{h}=(h_1,\cdots ,h_n)^T$ are assumed to be periodic if $\Omega={\mathbb T}^n$. Our method can be adapted to other domains $\Omega\subset {\mathbb R}^n$ with additional boundary conditions of Dirichlet, von Neumann or mixed type. Furthermore, in prinicpal our method can be applied to fluids on compact Riemannian manifolds $M$, where  an equation analogous to (\ref{qparasyst1}) is of the form
\begin{equation}
\frac{\partial {\bf u}}{\partial t}-{\cal L}{\bf u}+\nabla_{\bf u}{\bf u}=0.
\end{equation}
Here, $\nabla$ denotes the covariant derivative, and ${\cal L}$ may be the Hodge Laplacian 
$\Delta=-(dd^*+dd^*)$ or the Bochner Laplacian 
$-\nabla^*\nabla$ with 
$\nabla:C^{\infty}\left(M,TM \right)\rightarrow C^{\infty}\left(M,T^*\otimes T \right)$, or some similar operator arising from the covariant derivative.

However, in order to avoid too much technicalities  we mainly stick to the case $\Omega={\mathbb R}^n$, and confine ourselves to cursory remarks regarding Cauchy problems on $\Omega={\mathbb T}^n$, on manifolds or initial-boundary  value problems. Our goal in this paper is to define a constructive scheme for the multivariate Burgers equation which also can be interpreted in a probabilistic way. In a subsequent paper we shall show that such schemes may be useful in order to define global computation schemes for the incompressible Navier-Stokes equation.

Back to the multivariate Burgers equation, if we put a potential source term on the right side of (\ref{qparasyst1}), then we have a generalization of Burgers one-dimensional model for fluids, i.e.
\begin{equation}\label{multburg}
\frac{\partial\mathbf{u}}{\partial t}-\nu \Delta \mathbf{u}+ (\mathbf{u} \cdot \nabla) \mathbf{u} =-\nabla F(t,x). 
\end{equation}
\begin{rem}
If the velocity field equals the gradient of some potential function $\phi$ (in analogy to a physical potential of a conservative force) for initial time $t_0$, then we have
\begin{equation}\label{pot}
\mathbf{u}(t_0,x)=-\nabla \phi(t_0,x)~~\mbox{($t_0$ some initial time)},
\end{equation}
and equation (\ref{multburg}) is equivalent to the evolution of $\phi$ with

\begin{equation}\label{pot1}
\frac{\partial \phi}{\partial t}-\nu \Delta \phi=\frac{1}{2}|\phi|^2+F(t,x).
\end{equation} 
So-called Hopf-Cole transformation may be used to transform to
\begin{equation}\label{cole}
\frac{\partial\alpha}{\partial t}-\nu \Delta \alpha=\frac{1}{2\nu}F\alpha.
\end{equation} 
Global existence and effcient computation schemes for such scalar equations are well known and for some conditions on $F$ explicit local expansions of the fundamental solution lead to schmes which are of particular efficiency (cf. \cite{Kale}). We know that data $\mathbf{u}(0,.)$ satisfy (\ref{pot}) for some potential function $\phi$ if and only if the curl of $\mathbf{u}$ is zero. Incompressibility is then given for free since $\nabla \cdot \left(  \nabla\times\mathbf{u}\right) =\div\left( \nabla\times \mathbf{u}\right) =0$ for smooth data.
However, for studying fluids we need to analyze the situation for data more general than just potential data.
We deal with this more general situation in this paper where no Hopf-Cole transformation is available.
\end{rem}
\begin{rem}
Note that global existence for the system (\ref{qparasyst1}) can be obtained from estimates of the solution in the supremum norm in terms of the supremum norm of the initial data, i.e. of the form
\begin{equation}\label{apriorimax}
\max_{j}\sup_{x\in \Omega}|u_j(t,x)|\leq \max_{j}\sup_{x\in \Omega}|f_j(x)|,
\end{equation}
which may be obtained form estimates of the form
\begin{equation}\label{aprioriest}
\frac{\partial}{\partial t}\|u(t,.)\|_{H^s}\leq \|u(t,.)\|_{H^{s+1}}\sum_{i,j}\sum_{|\alpha|+|\beta|\leq s}\|D^{\alpha}u_iD^{\beta}u_j\|_{L^2}-2\|\nabla u\|^2_{H^s}
\end{equation}
for some postive $s\in {\mathbb R}$.
\end{rem}

Systems of form (\ref{qparasyst1}) are building blocks of well-known models in fluid mechanics and finance. 
Let us consider the Navier-Stokes equations
\begin{equation}\label{nav}
    \frac{\partial\mathbf{v}}{\partial t}-\nu \Delta \mathbf{v}+ (\mathbf{v} \cdot \nabla) \mathbf{v} = - \nabla p + \mathbf{f}~~~~t\geq 0,~~x\in\Omega,
\end{equation}
together with the incompressibility assumption
\begin{equation}\label{navdiv}
\nabla \cdot \mathbf{v} = 0,~~~~t\geq 0,~~x\in\Omega, 
\end{equation}
and together with the initial conditions
\begin{equation}\label{navinit}
\mathbf{v}(0,x)=\mathbf{h}(x),~~x\in\Omega,
\end{equation}
where $\mathbf{h}(x)=\left(h_1(x),\cdots ,h_n(x) \right) $ is a given function with components in $C^ {\infty}\left(\Omega \right)$.
This equation is to be solved for an unknown velocity vector $\mathbf{v}(t,x)=(v_1(t,x),\cdots,v_n(t,x))$ with $(t,x)\in [0,\infty)\times \Omega$, and
$\mathbf{v}(t,x)\in\left[C^\infty([0,\infty)\times\Omega)\right]^n$, and with a scalar pressure $p(t,x)\in C^\infty([0,\infty)\times\Omega^n)$, where $\nu=\frac{\eta}{\rho}>0$ is a the strictly positive viscosity constant. Furthermore, $$\mathbf{f}(t,x)=\left(f_1(t,x),\cdots ,f_n(t,x) \right) $$ is a given external force where $f_i\in C^{\infty}\left([0,\infty)\times \Omega\right) $ for each $1\leq i\leq n$. 

\begin{rem}
Let us mention here, that in case of a compact Riemannian manifold the Navier-Stokes equation takes the form
\begin{equation}\label{navmani}
\frac{\partial {\bf u}}{\partial t}-{\cal L}{\bf u}+\nabla_{{\bf u}}{\bf u}=-\mbox{grad}~p,~~\mbox{div}~{\bf u}=0,~~{\bf u}(0,.)={\bf h}(.),
\end{equation}
where we use the notation 'grad' instead of the 'nabla'-operator notation in this context in order to avoid confusion with the symbol of covariant derivative. On $\mbox{ker}~\mbox{div}$ one has
\begin{equation}
{\cal L}{\bf u}=\Delta {\bf u}+2\mbox{Ric}~{\bf u},
\end{equation}
where $\mbox{Ric}$ denotes the Ricci tensor. For manifolds with constant curvature, i.e. $\mbox{Ric}= c\cdot \mbox{id}$, we get ${\cal L}=\Delta +2c\cdot \mbox{id}$, and in this case we can eliminate the pressure and write (\ref{navmani}) in the form
\begin{equation}\label{leray}
\frac{\partial {\bf u}}{\partial t}-{\cal L}{\bf u}+P\nabla_{{\bf u}}{\bf u}=0,~~{\bf u}(0,.)={\bf h}(.),
\end{equation}
where ${\bf h}(.)$ is still assumed to have divergence zero. Here, $P$ is the Leray projection, i.e. the orthogonal projection of $L^2\left(M,TM\right)$ onto the kernel of the divergence operator.
\end{rem}

If $\Omega={\mathbb R}^n$, then a natural growth condition for the data is that for all $x\in\mathbb{R}^n$
\begin{equation}\label{growthh}
    \vert \partial^\alpha \mathbf{h}(x)\vert\le \frac{C_{\alpha K}}{(1+\vert x\vert)^K}\qquad 
\end{equation} 
holds for any multiindices $\alpha$ and $K$ with some constants $C_{\alpha K}$. 
Furthermore, in this case it is assumed that
\begin{equation}\label{growthf}
    \vert \partial^m_t\partial^\alpha \mathbf{f}(t, x)\vert\le \frac{C_{\alpha m K}}{(1+\vert x\vert + t)^K}\qquad ~~\mbox{for all}~~ \qquad (t,x)\in [0,\infty)\times \mathbb{R}^n,
\end{equation}
and for any multiindices $\alpha$ and and integers $K$ with nonnegative integers $m$, and with some constants $C_{\alpha m K}$.
\begin{rem}
In the following we assume that $\mathbf{f}=0$, although the method considered is not restricted to this case.
Note that the condition (\ref{growthh}) ensures that
\begin{equation}
\mathbf{h}\in \left[ H^s\left({\mathbb R}^n\right)\right]^n,
\end{equation}
where for each $s\in{\mathbb R}$ $H^s$ denotes a Sobolev space, i.e.
\begin{equation}
H^s=H^s\left({\mathbb R}^n\right)=\left\lbrace f\in {\cal S}'\left({\mathbb R}^n\right) |\hat{f}~~\mbox{is a function and}~~\|f\|_s^2<\infty\right\rbrace ,
\end{equation}
along with
\begin{equation}
 \|f\|_s^2\equiv\int_{{\mathbb R}^n}|\hat{f}(y)|^2(1+|y|^2)^sdy<\infty,
\end{equation}
and where $\hat{f}$ denotes the Fourier transform of $f$, and 
${\cal S}'\left({\mathbb R}^n\right) $ denotes a space of tempered distributions.
We shall use these function spaces for the convergence of the semi-explicit perturbative expansion on the whole space $\left[0,\infty \right)\times {\mathbb R}^n$. In case $p=2$ the definition is equivalent to the more classical definition of spaces $H^{k,p}$ for nonnegative integers $k$ and real $p\in [1,\infty)$, i.e.
\begin{equation}
H^{k,p}\left({\mathbb R}^n\right) :=\left\lbrace u\in L^p\left({\mathbb R}^n\right)|D^{\alpha}u\in L^p\left({\mathbb R}^n\right)  \mbox{ for $|\alpha|\leq k$}\right\rbrace . 
\end{equation}
More generally, for $s\in{\mathbb R}$ one defines
\begin{equation}
H^{s,p}\left({\mathbb R}^n\right):=\Lambda^{-s}L^p\left({\mathbb R}^n\right),~~\mbox{where}~~\Lambda^su={\cal F}^{-1}\left( (1+y^2)^{s/2}\hat{u}\right),   
\end{equation}
and ${\cal F}^{-1}$ denotes the inverse of the Fourier transform operator.
\end{rem}
Next for an arbitrary step size $T>0$ in the original time coordinates we shall find a time discretization $\left\lbrace  T_l,~l=0,\cdots ,N\right\rbrace $ with $T_l=\sum_{k=1}^l\rho_k\uparrow \infty $ where we define a local scheme recursively for each time interval $\left[ T_{l-1},T_l\right]$ in such a way that the growth of the solution $u$ is well-controlled in order to allow for a global scheme. We shall see that for our global scheme the step size is of the form
\begin{equation}
\rho_l\sim\frac{C}{l}
\end{equation}
for a fixed constant $C$ such that the scheme is indeed global with respect to time. 
We noticed above that a growth condition is available from a priori estimates, but we shall get an estimate independently. One advantage of our approach is that our consideration can be generalized in order to define global schemes for Navier-Stokes type equations.  A second advantage is that we get a constructive solution scheme which leads to computation schemes. In order to prove local convergence with respect to time we shall consider a simple time transformation at each time step, i.e., we shall consider transformations of the form  $t=\rho_l \tau$  where  $t\in [T_{l-1},T_l]$ , and where $\tau \in [(l-1)T,lT]$ for natural numbers $l\geq 1$. On each such interval we denote $k$th approximation of the solution by
\begin{equation}
(\tau,x)\rightarrow {\bf u}^{\rho,k,l}=(u_1^{\rho, k,l},\cdots ,u_n^{\rho,k,l})
\end{equation}
where the index $k$ in ${\bf u}^{\rho,k,l}$ refers to iterations at a given time step and $l-1$ is the number of time steps accomplished.
We start with the first time step solving (\ref{qparasyst1}) on $[0,\rho_1 T]\times {\mathbb R}^n$ with respect to $t$ coordinates and on $[0,T]$ with respect to $\tau$ coordinates, where $t=\rho_1 \tau$ (the parameter $\rho_1$ will be determined below).
Let the function $(\tau,x)\rightarrow {\bf u}^{\rho,1,0}=(u_1^{\rho, 1,0},\cdots ,u_n^{\rho,1,0})$ be the solution of the equation system
\begin{equation}\label{scalparasystlin10}
\left\lbrace \begin{array}{ll}
\frac{\partial u^{\rho,1,0}_i}{\partial \tau}=\rho_1\left( \nu\sum_{j=1}^n \frac{\partial^2 u^{\rho,1,0}_i}{\partial x_j^2} 
-\sum_{j=1}^n h_j\frac{\partial u^{\rho,1,0}_i}{\partial x_j}\right) ,~~1\leq i\leq n,\\
\\
{\bf u}^{\rho,1,0}(0,.)={\bf h}.
\end{array}\right.
\end{equation} 
Since the $h_i,~1\leq i\leq n$ are known, at this stage we have no coupling, and (\ref{scalparasystlin10}) are essentially $n$ scalar equations. 
Having defined ${\bf u}^{\rho,k,0}=(u_1^{\rho, k,0},\cdots,u_n^{\rho, k,0})^T$ with ${\bf u}^{\rho,k,0}(0,x)={\bf h}$ for some $k\geq 1$ we define ${\bf u}^{\rho,k+1,0}$ recursively to be the solution of (corresponding to the domain $[0,\rho_1 T]\times {\mathbb R}^n$ in original coordinates $t$ and to $[0,\rho_1 T]\times {\mathbb R}^n$ in transformed coordinates $\tau$.)
\begin{equation}\label{urhok}
\left\lbrace \begin{array}{ll}
\frac{\partial u^{\rho,k+1,0}_i}{\partial \tau}=\rho_1\left( \sum_{j=1}^n \frac{\partial^2 u^{\rho,k+1,0}_i}{\partial x_j^2} 
-\sum_{j=1}^n u^{\rho,k,0}_j\frac{\partial u^{\rho,k+1,0}_i}{\partial x_j}\right) ,~~1\leq i\leq n,\\
\\
\mathbf{u}^{\rho, k+1,0}(0,.)=\mathbf{h}.
\end{array}\right.
\end{equation} 
Then subtracting the equation (\ref{urhok}) for ${\bf u}^{\rho,k,0}$ from the equation (\ref{urhok}) for ${\bf u}^{\rho,k+1,0}$ we get 
\begin{equation}\label{deltaurhok0}
\left\lbrace \begin{array}{ll}
\frac{\partial \delta u^{\rho,k+1,0}_i}{\partial \tau}=\rho_1\left( \sum_{j=1}^n \frac{\partial^2 \delta u^{\rho,k+1,0}_i}{\partial x_j^2} 
-\sum_{j=1}^n u^{\rho,k,0}_j\frac{\partial \delta u^{\rho,k+1,0}_i}{\partial x_j}\right) \\
\\
\hspace{2cm}-\rho_1\sum_j\left(\delta u^{\rho,k,0}_j\frac{\partial u^{\rho,k,0}}{\partial x_j} \right),\\ 
\\
\mathbf{\delta u}^{\rho,k+1,0}=0,
\end{array}\right.
\end{equation}
where $\delta u^{\rho,k+1,0}_{i}=u^{\rho,k+1,0}_{i}-u^{\rho,k,0}_{i}$ for $k\geq 1$, and 
$\delta u^{\rho,1,0}_j= u^{\rho,1,0}_j-h_j$.
Now look at the family of equations in (\ref{deltaurhok0}). They are linear equations with first order coefficients $u^{\rho,k,0}_j$ available externally from the previous step respectively. Again these are essentially $n$ scalar equations. The coupling is 'externalized' to the coefficients $u^{\rho,k,0}_j$ and the source terms carrying information from the previous step.
Next $\rho_1>0$ can be chosen such that on the domain $[0,\rho_1 T]\times {\mathbb R}^n$
\begin{equation}\label{funci}
u^{\rho ,0}_i=u^{\rho,1,0}_i+\sum_k \delta u^{\rho, k+1,0}_i, 1\leq i\leq n
\end{equation}
is a strongly convergent series and defines a function ${\bf u}^{\rho ,0}$ in a suitable classical space.
Having defined the scheme for time steps $0,\cdots ,l-1$ we define the scheme with time step index $l$ (i.e. the scheme for the solution on the domain $[T_{l-1},T_l]$ in original time-coordinates $t$) in an analogous way (using the semi-group property).
Let the function $(\tau,x)\rightarrow {\bf u}^{\rho,1,l}=(u_1^{\rho, 1,l},\cdots ,u_n^{\rho,1,l})$ be the solution of the equation system
\begin{equation}\label{scalparasystlin1*}
\left\lbrace \begin{array}{ll}
\frac{\partial u^{\rho,1,l}_i}{\partial \tau}=\rho_l\left( \sum_{j=1}^n \frac{\partial^2 u^{\rho,1,l}_i}{\partial x_j^2} 
-\sum_{j=1}^n u^{\rho,1,l-1}_j\frac{\partial u^{\rho,1,l}_i}{\partial x_j}\right) ,~~1\leq i\leq n,\\
\\
{\bf u}^{\rho,1,l}((l-1)T,.)={\bf u}^{\rho,l-1}((l-1)T,.),
\end{array}\right.
\end{equation} 
and let $(\tau,x)\rightarrow {\bf u}^{\rho,k+1,l}=(u_1^{\rho, k+1,l},\cdots ,u_n^{\rho,k+1,l})$ be the solution of the equation system
\begin{equation}\label{scalparasystlin1*2}
\left\lbrace \begin{array}{ll}
\frac{\partial u^{\rho,k+1,l}_i}{\partial \tau}=\rho_l\left( \sum_{j=1}^n \frac{\partial^2 u^{\rho,k+1,l}_i}{\partial x_j^2} 
-\sum_{j=1}^n u^{\rho,k,l}_j\frac{\partial u^{\rho,k+1,l}_i}{\partial x_j}\right) ,~~1\leq i\leq n,\\
\\
\mathbf{u}^{\rho, k+1,l}((l-1)T,.)={\bf u}^{\rho,l-1}((l-1)T,.).
\end{array}\right.
\end{equation}
\begin{rem}
Note that in the original coordinates the initial condition at the $l$th time step is
$${\bf u}^{1,l}((T_{l-1},.)={\bf u}^{\rho,1,l-1}((l-1)T,.).$$
\end{rem}

Then subtracting the equation (\ref{scalparasystlin1*2}) for ${\bf u}^{\rho,k,l}$ from the equation (\ref{scalparasystlin1*2}) for ${\bf u}^{\rho,k+1,l}$ we get 
\begin{equation}\label{deltaurhok}
\left\lbrace \begin{array}{ll}
\frac{\partial \delta u^{\rho,k+1,l}_i}{\partial \tau}=\rho_l\left( \sum_{j=1}^n \frac{\partial^2 \delta u^{\rho,k+1,l}_i}{\partial x_j^2} 
-\sum_{j=1}^n u^{\rho,k,l}_j\frac{\partial \delta u^{\rho,k+1,l}_i}{\partial x_j}\right) \\
\\
\hspace{2cm}-\rho_l\sum_j\left(\delta u^{\rho,k,l}_j\frac{\partial u^{\rho,k,l}_i}{\partial x_j} \right),\\ 
\\
\mathbf{\delta u}^{\rho,k+1,l}=0.
\end{array}\right.
\end{equation}
This leads to a functional series
\begin{equation}\label{funci2}
u^{\rho ,l }_i=u^{\rho,1,l}_i+\sum_k \delta u^{\rho, k+1,l}_i, 1\leq i\leq n
\end{equation}
which is defined on $[(l-1)T,lT]\times {\mathbb R}^n$ in $\tau$-coordinates corresponding to $[T_{l-1},T_l]\times {\mathbb R}^n$ in original $t$-coordinates. If we can choose
$\rho_l,~l\geq 1$ such that 
\begin{equation}
\sum_{l\geq 1} \rho_l\uparrow \infty \mbox{ as } l\uparrow \infty,
\end{equation}
then we have a global scheme. We shall prove in this paper that such a choice is possible such that the scheme is indeed global.
Moreover, we shall see that the convergence of (\ref{funci}) and (\ref{funci2})  is such that differentiation can be done term by term, and convergence of the differentiated functional series is uniform and absolute in a pointwise sense. Plugging in the series (\ref{funci}), (\ref{funci2}) into (\ref{qparasyst1}) and rearranging the summands
 according to the laws one checks easily that the limit in 
(\ref{funci}, \ref{funci2}) leads indeed to a solution of the multivariate Burgers equation.

The construction outlined has an interesting consequence: the solution of the multivariate Burgers equation has a representation in terms of a series of fundamental solutions of (scalar !) linear equations. This may be useful for numerical purposes since there are efficient schemes for fundamental solutions of scalar linear parabolic equations in terms of WKB-expansions or related analytic expansions.
Note also that this leads to probabilistic schemes where elaborated weighted Monte-Carlo schemes may be used (cf. \cite{KKS} and \cite{FK} and references therein). This would be attractive especially in cases of complicated boundary conditions, or if coupling with other equations (e.g. Maxwell equations) lead to complcated models where traditional techniques are of very limited success so far. However, the following semi-explicit formulas may also be used in the context of sparse grids or adaptive sparse grids etc. (regularity for the solution  makes this option attractive). Let us spell out the scheme in terms of fundamental solutions (densities).
For each positive integer $k\geq 1$ and given $u^{\rho, l}_k,~1\leq k\leq n$ and time step $l$ let us denote the fundamental solution of the linear equation
\begin{equation}\label{funddeltaurhok}
\begin{array}{ll}
\frac{\partial w_i}{\partial \tau}=
\rho_l\left( \sum_{j=1}^n \frac{\partial^2 w_i}{\partial x_j^2}+\sum_{j=1}^n u^{\rho,k,l}_j\frac{\partial w_i}{\partial x_j}\right) 
\end{array}
\end{equation}
by $\Gamma^{\rho,\Omega ,l}_k$ (we can construct this solution by the classical Levy expansions and find very effcient higher order approximations in terms of local analytic expansions; the upperscript $\Omega$ of $\Gamma^{\rho,\Omega ,l}_k$ indicates that we apply such approximations on bounded domains obtained either by transformations or by cutoff).
Then given $u_i^{\rho,k,l}$ we have the representation
\begin{equation}\label{funddeltarep}
\begin{array}{ll}
\delta u^{\rho, k+1,l}_i(\tau ,x)=\\
\\
\int_0^{\tau}\int_{\Omega}\rho\sum_j\delta u_j^{\rho,k,l}(\sigma ,y)\frac{\partial u^{\rho,k,l}_i}{\partial x_j}(\sigma ,y)\Gamma^{\rho,\Omega ,l}_k(\tau ,x,\sigma,y)d\sigma dy.
\end{array}
\end{equation}
The series starts at $l=0$ with the fundamental solution $\Gamma^{\rho,\Omega ,0}_1$ of 
\begin{equation}\label{funddeltaurho1}
\begin{array}{ll}
\frac{\partial w_i}{\partial \tau}=
\rho\left( \sum_{j=1}^n \frac{\partial^2 w_i}{\partial x_j^2}+\sum_{j=1}^n h_j\frac{\partial w_i}{\partial x_j}\right), 
\end{array}
\end{equation}
and with
\begin{equation}
\delta u^{\rho ,1}_j(\tau,x)=u^{\rho, 1}_j-h_j=\int_{\Omega} h_j(y)\Gamma^{\rho,\Omega ,0}_1(\tau,x;0,y)dy-h_j(x),
\end{equation}
and proceeds analogously for $l>0$.
We can make the representation (\ref{funddeltarep}) at each time step $l$ more explicit, i.e.
\begin{equation}
\begin{array}{ll}
\delta u^{\rho, k+1,l}_i(\tau ,x)=\\
\\
\int_0^{\tau}\int_{\Omega}\cdots \int_0^{\sigma_n}\int_{\Omega}\left( \sum_{j_1=1}^n \rho_l\delta u_{j_1}^{\rho,1,l}\frac{\partial u^{\rho,1,l}_i}{\partial x_{j_1}}\right)(\sigma_1 ,y_1) \\
\\
\Pi_{m=2}^k\left( \sum_{j_m=1}^n\rho_l u^{\rho,m,l}_{j_m}\frac{\partial u^{\rho,m,l}_i}{\partial x_{j_m}}\right) (\sigma_m ,y_m)\times \\
\\
\Pi_{m=2}^{k-1}\Gamma^{\rho,\Omega ,l}_m(\sigma_{m},y_m ,\sigma_{m-1},y_{m-1})\Gamma^{\rho,\Omega ,l}_k(\tau ,x,\sigma_k,y_k)d\sigma_1 dy_1\cdots d\sigma_k dy_k.
\end{array}
\end{equation}
Hence we have the formal representation for the solution ${\bf u}=(u_1,\cdots ,u_n)^T$ of the multivariate Burgers equation with zero source term
\begin{equation}
\begin{array}{ll}
u^{\rho,1,0}_i+\sum_{l=0}^{\infty}\left( \sum_{k=0}^{\infty}\delta u^{\rho, k+1,l}_i\right) (\tau ,x)=\\
\\
u^{\rho,1,0}_i+\sum_{l=0}^{\infty}\int_0^{\tau}\int_{\Omega}\cdots \int_0^{\sigma_n}\int_{\Omega}\left( \sum_{j_1=1}^n \rho_l\delta u_{j_1}^{\rho,1,l}\frac{\partial u^{\rho,1,l}_i}{\partial x_{j_1}}\right)(\sigma_1 ,y_1) \\
\\
\sum_{k=0}^{\infty}\Pi_{m=2}^k\left( \sum_{j_m=1}^n\rho_l u^{\rho,m,l}_{j_m}\frac{\partial u^{\rho,m,l}_i}{\partial x_{j_m}}\right) (\sigma_m ,y_m)\times \\
\\
\Pi_{m=2}^{k-1}\Gamma^{\rho,\Omega ,l}_m(\sigma_{m},y_m ,\sigma_{m-1},y_{m-1})\Gamma^{\rho,\Omega ,l}_k(\tau ,x,\sigma_k,y_k)d\sigma_1 dy_1\cdots d\sigma_k dy_k,
\end{array}
\end{equation}
where for each $l$ the whole related series for $k$ has to be summed up, and where
\begin{equation}\label{urec}
\begin{array}{ll}
u^{\rho, 1,0}_i(\tau,x)=\int_{\Omega} h_j(y)\Gamma^{\rho,\Omega ,l}_1(\tau,x;0,y)dy,~~1\leq i\leq n\\
\\
\frac{\partial}{\partial x_k}u^{\rho, 1,0}_i(\tau,x)=\int_{\Omega} h_j(y)\frac{\partial}{\partial x_k}\Gamma^{\rho,\Omega ,0}_1(\tau,x;0,y)dy,~~1\leq i,k\leq n\\
\\
\delta u^{\rho, 1,0}_j(\tau,x)=u^{\rho, 1,0}_j(\tau,x)-h_j(x),~~1\leq j\leq n.
%
\end{array}
\end{equation}
Analogously for time steps $l>0$.
The latter recursive scheme leads to semi-explicit recursive formulas for the multivariate Burgers equation. Note that the fundamental solutions of the parabolic recursively linear  equations involved have an explicit representation. It leads also to new higher order numerical schemes.

In the next section we show how a series $(\rho_l)$ can be chosen such that we have global convergence of the scheme above.

\section{Convergence of the global scheme}
We may assume that the time step size in transformed coordinates equals $1$, i.e. w.l.o.g. we assume $T=1$ (the scheme of section 1 is a global scheme if it converges to the solution, where $\sum_{l}\rho_l\uparrow \infty$). This leads to a time scheme with time-step size of the $l$th time step proportional to $\frac{1}{l}$.
\begin{rem}
Justification of the definite article in 'the solution', i.e. uniqueness, is provided below in Theorem 3.1.
\end{rem}

We construct the solution $u(t,x)=u^{\rho}(\tau ,x)$ of the multivariate Burgers equation in terms of a time discretized scheme of functional series where for all $l\geq 1$ and for $(t,x)\in [T_{l-1},T_l]\times {\mathbb R}^n$ resp. $(\tau,x)\in [(l-1),l]\times {\mathbb R}^n$ we have
\begin{equation}\label{funci3}
u^{\rho}_i(\tau,x):=u^{\rho ,l }_i(\tau,x)=u^{\rho,1,l}_i(\tau,x)+\sum_k \delta u^{\rho, k+1,l}_i(\tau,x), 1\leq i\leq n.
\end{equation}
We show that there is a sequence of numbers $(\rho_k)$ and a related time discretization
$\left\lbrace T_l|l=1,\cdots ,N\right\rbrace $ where $T_N\uparrow \infty$ as $N\uparrow \infty$ such that ${\bf u}^{\rho}=(u^{\rho}_1,\cdots ,u^{\rho}_n)^T$ defined via (\ref{funci3}) satisfies the multivariate Burgers equation.

\begin{rem}
It is interesting that the condition of decay for the initial data in (\ref{growthh}) implies that we can transform the Cauchy problem to a compact domain considering the transformation $y_i=\arctan(x_i),~1\leq i\leq n$ and the related equation for the function ${\bf \tilde{u}}$, where
\begin{equation}
\tilde{u}_i(t,y)=u_i(t,x)~~\mbox{resp.}~{\tilde{u}}_i^{\rho}(\tau,y)=u_i^{\rho}(t,x).
\end{equation}
Then we have 
\begin{equation}
\frac{\partial u_i}{\partial x_j}=\frac{\partial {\tilde {u}}_i}{\partial y_j}\frac{1}{1+\tan^2(y_j)},
\end{equation}
and
\begin{equation}
\frac{\partial^2 u_i}{\partial x_j\partial x_k}=\frac{\partial^2 {\tilde{u}}_i}{\partial y_j^2}\frac{1}{\left( 1+\tan^2(y_j)\right)^2}-\delta_{jk}\frac{\partial {\tilde {u}}_i}{\partial y_j}\frac{2\tan(y_k)}{\left( 1+\tan^2(y_j)\right)\left( 1+\tan^2(y_k)\right)},
\end{equation}
where $\delta_{ij}$ denotes the Kronecker delta. The Cauchy problem for ${\bf \tilde{u}}=(\tilde{u}_1,\cdots \tilde{u}_n)^T$ related to the multivariate Burgers equation live on the domain $[0,T]\times \left( -\pi/2,\pi/2\right)^n $.
 It is interesting that, for example, the relation 
\begin{equation}\label{ubehavinfty}
\left( 1+|x|^2\right)^3\frac{\partial^2 u_i}{\partial x_j\partial x_k}\downarrow 0 \mbox{ as } |x|\uparrow \infty
\end{equation}
enforces 
\begin{equation}
\frac{\partial^2 {\tilde{u}}_i}{\partial x_j\partial x_k}\downarrow 0 \mbox{ as } |x|\uparrow \pi/2,
\end{equation}
and our analysis below shows that (\ref{ubehavinfty}) is indeed the case if the initial data satisfying (\ref{growthh}). This makes Dirichlet data natural for the related initial-boundary value problem for ${\bf \tilde{u}}$. We shall use this in the second part of this paper when we deal with the Navier Stokes equation.
\end{rem}
In order to show the global convergence of the functional series $u_i$ we use classical results for scalar equations. First we shall consider convergence with respect to  the supremum norm $|.|_0$, where for bounded mesurable functions we define
\begin{equation}
|g|_0:=\sup_{(t,x)\in [0,T]\times{\mathbb R}^n}|g(t,x)|.
\end{equation}
As we shall see the growth of the solution ${\bf u}$ of our scheme is controlled by the maximum principle. This growth is linear with respect to the time horizon and controls the time step size of the scheme (which has reciprocal value essentially). In order to ensure the convergence of the substeps $k$ of the $l$th time step  (which lead to a local solution of the multivariate Burgers equation with respect to time) we observe that we can control the first order derivatives of the correction terms $\delta u^{\rho,k,l}_i$ uniformly. The time step size is chosen in such a way that we get a uniform and absolute estimates with respect to the norm $|.|_{1,2}$. At each time step we also establish an uniform and abolute convergent geometric series bound  for the correction terms series $\left( \delta u^{\rho,k,l}_i\right)_k$ for each $1\leq i\leq n$ with respect to the norm $|.|_{1,2}$. This implies that derivatives can be taken term by term and that the functional series (\ref{funci3}) is indeed a solution of the multivariate Burgers equation. So much for the outline.

Next we need some elementary technical preparations (quite standard). Define the Euclidean distance in ${\mathbb R}^{n+1}$ between the points $y_1=(t_1,x_1), y_2=(t_2,x_2)$ by
\begin{equation}
e(z_1,z_2)=\sqrt{|t_1-t_2|}+|x_1-x_2|.
\end{equation}
Although we do not consider convergence w.r.t. H\"{o}lder norms we shall use them at one step of the argument.  
If $w$ is a function in a domain $D\subset{\mathbb R}^{n+1}$ we denote
\begin{equation}
[w]_{\delta/2,\delta, D}=\sup_{y_1\neq y_2;y_1,y_2\in D}\frac{|w(y_1)-w(y_2)|}{e^{\delta}(y_1,y_2)}.
\end{equation}
Next define
\begin{equation}
|w|_{\delta/1,\delta;D}=|w|_{0 ,D}+[w]_{\delta/2,\delta;D}.,
\end{equation}
Furthermore we shall use some classical spaces
\begin{equation}\label{classnorm0}
|w|_{0,1;D}:=|w|_{0;D}+\sum_{i=1}^n|w_{x_i}|_{0;D},~\mbox{and}
\end{equation}
\begin{equation}\label{classnorm}
|w|_{1,2;D}:=|w|_{0;D}+\sum_{i=1}^n|w_{x_i}|_{0;D}+|w_t|_{0;D}+\sum_{i,j=1}^n|w_{x_ix_j}|_{0;D}.
\end{equation}
Note that the latter norms do not define Banach spaces. However, we shall use the fact that a functional series which is uniformly and absolutely bounded with uniformly and absolutely bounded derivatives can be differentiated term by term. For this matter (\ref{classnorm}) is useful. 
In the following for $T\in (0,\infty)$ let $D=Q_T=[0,T]\times {\mathbb R}^n$. 
For vector-valued functions ${\mathbf w}=(w_1,\cdots ,w_n)^T$ we define
\begin{equation}\label{norm}
|{\mathbf w}|_{1,2,D}=\max_{1\leq i\leq n}|w_i|_{1,2,D}
\end{equation}
etc. Note that we can rewrite the Cauchy problem (\ref{qparasyst1}) in terms of coordinates $\tau(t)=\rho_lt$ for $\tau\in [T_{l-1},T_l]$ as in (\ref{basic*}) where we define
\begin{equation}
\rho(\tau):=\rho_l~\mbox{ if }~\tau\in [T_{l-1},T_l]
\end{equation}
for all $l\geq 0$.
Well, on key idea of the global scheme of this article is that the time step size decreases with order $\frac{C}{l}$ at the $l$th time step where the estimate of the solution is bounded by $C_l=C_0+l$ with $C_0$ a bound for the initial data (with respect to some strong Banach norm). This implies that the first order coefficients at the iteration substeps of each time step are uniformly bounded by some constant $C^*$ which is independent of the time step number $l$.

\begin{rem}
Note that a global (non-constructive) estimate (as (\ref{apriorimax}) based on (\ref{aprioriest}) above in remark 1.2) may lead us to a global scheme with an uniform time grid. However, we do not consider this for two reasons. First we want an explicit control of the time step in terms of the data and this is not provided by the a priori estimate (the size of the time steps may be uniform but so small that they are not useful in practice). Second, the present scheme may be extended to Navier-Stokes equations and there are no such a priori estimates available for the Navier-Stokes equation in the crucial case of dimension $3$.   
\end{rem}
 
We have
 \begin{thm}\label{burgglob}
 Assume that the initial data ${\bf h}$ satisfy the condition (\ref{growthh}). Let
 \begin{equation}
 C_0:=|{\bf u}^{\rho,1,0}|_{0}:=\max_{j\in \left\lbrace 1,\cdots n\right\rbrace} | u^{\rho,1,0}_j|_{0},
 \end{equation}
and define $C^*_n$ as in lemma \ref{lemma} below. 
 Define positive real numbers $(\rho_l)$ and $(C_l)$ recursively via
 \begin{equation}\label{rec}
 \begin{array}{ll}
 \rho_l=\frac{1}{ 4C^*_n C_{l-1}}=\frac{1}{ 4C^*_n\left(  C_{0}+(l-1)\right) }
\end{array}
\end{equation}
where $C_l=C_{l-1}+1$ for $l\geq 1$. For this sequence $(\rho_k)$ the functional series scheme above converges to the global classical solution ${\bf u^{\rho}}=(u^ {\rho}_1,\cdots ,u^{\rho}_n)^T\in \left[ C^{1,2}\left(\left[0,\infty\right)  \times \Omega  \right)\right]^n$ of
\begin{equation}\label{basic*}
\left\lbrace \begin{array}{ll}
\frac{\partial u^{\rho}_i}{\partial \tau}=\rho (\tau) \left( \sum_{j=1}^n \frac{\partial^2 u^{\rho}_i}{\partial x_j^2} 
+\sum_{j=1}^n u^{\rho}_j\frac{\partial u^{\rho}_i}{\partial x_j}\right) ,\\
\\
\mathbf{v}=\mathbf{h},
\end{array}\right.
\end{equation}
where $(t,x)\rightarrow {\bf u}(t,x)={\bf u^{\rho}}(\tau ,x)$ solves the original Cauchy problem (\ref{qparasyst1}) on $[0,\infty )\times {\mathbb R}^n$. The $C_l$ control the growth of the solution with respect to time.
\end{thm}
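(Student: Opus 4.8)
The plan is to reduce the global statement to a uniform local analysis on each transformed time slab $[(l-1),l]\times{\mathbb R}^n$ and then to concatenate the local solutions by the semigroup property, calibrating the growth so that the step sizes $(\rho_l)$ from (\ref{rec}) sum to infinity. The decay hypothesis (\ref{growthh}) first guarantees that the frozen initial data and all their derivatives are bounded, so that $C_0=|{\bf u}^{\rho,1,0}|_0$ is finite and every iterate below lives in the classical spaces underlying (\ref{classnorm}). Fixing the time step $l$, I would treat the substep iteration (\ref{scalparasystlin1*2}) as a Picard scheme: the zeroth iterate $u^{\rho,1,l}$ solves a \emph{decoupled} family of scalar linear parabolic equations (the coupling being frozen into the externally given coefficient $u^{\rho,1,l-1}_j$), and each correction $\delta u^{\rho,k+1,l}_i$ solves the linear inhomogeneous equation (\ref{deltaurhok}) with zero initial data and source $-\rho_l\sum_j \delta u^{\rho,k,l}_j\,\partial_{x_j}u^{\rho,k,l}_i$. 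I would then record the explicit representation (\ref{funddeltarep}) of $\delta u^{\rho,k+1,l}_i$ through the fundamental solution $\Gamma^{\rho,\Omega,l}_k$ of (\ref{funddeltaurhok}), since all subsequent estimates are read off from Gaussian bounds on $\Gamma$ and its first two spatial derivatives.

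The heart of the argument is a single contraction estimate in the norm $|\cdot|_{1,2}$ of (\ref{classnorm}). Using the fundamental-solution bounds supplied by Lemma \ref{lemma} (which is exactly where the constant $C^*_n$ enters, absorbing the dimension-dependent kernel norms obtained after integrating the Gaussian estimates over the unit $\tau$-slab), I expect the representation (\ref{funddeltarep}) to yield
\begin{equation}
|\delta u^{\rho,k+1,l}|_{1,2}\le \rho_l\,C^*_n\,C_{l-1}\,|\delta u^{\rho,k,l}|_{1,2},
\end{equation}
where $C_{l-1}$ bounds the frozen coefficients $u^{\rho,k,l}_j$ and their first derivatives. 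With the prescribed choice $\rho_l=1/(4C^*_nC_{l-1})$ the factor equals $1/4$, so the correction series in (\ref{funci3}) is dominated by a geometric series of ratio $1/4$ and converges absolutely and uniformly in $|\cdot|_{1,2}$. Because the first- and second-order spatial derivatives and the $\tau$-derivative all converge uniformly, differentiation may be carried out term by term; substituting the limit back into the slab equation then shows that $u^{\rho,l}$ is a classical $C^{1,2}$ solution of the multivariate Burgers system on $[(l-1),l]\times{\mathbb R}^n$.

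To close the induction I would establish the a priori growth bound $C_l=C_{l-1}+1$ on the strong norm of the slab solution. The supremum norm is controlled for free by the maximum principle applied to the frozen linear equations (in the spirit of (\ref{apriorimax})), which forbids any increase of $|u^{\rho,l}|_0$; the potentially dangerous quantity is the first-derivative norm, for which differentiating the equation produces the quadratic source $-\rho_l\sum_j(\partial_{x_k}u_j)(\partial_{x_j}u_i)$. Integrating this source over the unit $\tau$-slab against the maximum principle gives an increment of order $\rho_l\,C^*_n\,C_{l-1}$, and the choice (\ref{rec}) is calibrated precisely so that this increment does not exceed $1$, yielding $C_l\le C_{l-1}+1$.

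The main obstacle is to make this self-consistent: the bound $C_l$ must be shown to hold uniformly over the entire substep iteration in $k$, so that the coefficient bound $C_{l-1}$ used in the contraction estimate is legitimate. This is a bootstrap — one must verify that the whole Picard sequence stays inside the ball of radius $C_l$ around the previous slab data — and it requires controlling the derivative growth and the geometric decay of the corrections \emph{simultaneously}, rather than in sequence. Granting this bootstrap, the step sizes satisfy $\sum_l\rho_l=\sum_l 1/(4C^*_n(C_0+l-1))=\infty$, so $T_N=\sum_{l=1}^N\rho_l\uparrow\infty$; concatenating the slab solutions by the semigroup property and undoing the time transformation $t=\rho_l\tau$ then produces the global classical solution ${\bf u}^{\rho}\in[C^{1,2}([0,\infty)\times\Omega)]^n$ of (\ref{basic*}), and hence of (\ref{qparasyst1}), with the $C_l$ controlling its temporal growth.
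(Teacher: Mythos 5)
Your architecture is the same as the paper's: slab-by-slab analysis in the transformed time variable, a Picard/fundamental-solution iteration for the corrections $\delta u^{\rho,k+1,l}_i$, a geometric-series bound in the norm $|\cdot|_{1,2}$ permitting term-by-term differentiation, the growth law $C_l=C_{l-1}+1$, and globality from $\sum_l\rho_l=\sum_l 1/(4C^*_n(C_0+l-1))=\infty$. However, there is a genuine gap exactly where you flag one and then write ``granting this bootstrap.'' Your contraction inequality $|\delta u^{\rho,k+1,l}|_{1,2}\le \rho_l C^*_n C_{l-1}|\delta u^{\rho,k,l}|_{1,2}$ presupposes that $C_{l-1}$ bounds ``the frozen coefficients $u^{\rho,k,l}_j$ \emph{and their first derivatives}'' uniformly in $k$. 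But $u^{\rho,k,l}=u^{\rho,1,l}+\sum_{m\le k}\delta u^{\rho,m,l}$, so the uniform bound on $\partial_{x_j}u^{\rho,k,l}_i$ is itself a consequence of the decay of the corrections that the contraction is supposed to establish. Asserting the contraction first and the coefficient bound afterwards is circular; this is the central technical point of the theorem, not a detail that can be granted.

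The paper breaks the circle by running a \emph{simultaneous} induction on two quantities: the sup-norm decay $|\delta u^{\rho,k,l}|_0\le 2^{-k}|\delta u^{\rho,1,l}|_0$ and the first-derivative decay $|\delta u^{\rho,k,l}|_{0,1}\le(\rho_l C^*_n C_l)^{k-1}|\delta u^{\rho,1,l}|_0$. The decoupling device is the fundamental representation: in estimating $\partial_{x_m}\delta u^{\rho,k+1,l}_i$ one differentiates the kernel $\Gamma^{\rho,l}_k$ rather than the source, and the derivative of the kernel has an integrable singularity of order $t^{-\mu_1}$, $\mu_1<1$, over the unit $\tau$-slab; consequently the right-hand side of the derivative estimate involves only $|\delta u^{\rho,k,l}|_0$ and \emph{not} its derivative (the paper remarks this explicitly after its estimate of $|\delta u^{\rho,2,0}|_1$). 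The two inductive hypotheses then feed each other without circularity: the sup-norm hypothesis at step $k$ yields the derivative bound at step $k+1$, and summing the derivative bounds over $m\le k$ (a geometric series with ratio $\rho_l C^*_n C_l\le 1/4$) yields the uniform coefficient bound needed for the sup-norm estimate at step $k+1$. Your proposal would be complete if you replaced the single $|\cdot|_{1,2}$-contraction by this two-tier induction and justified, via the kernel estimates of Lemma \ref{lemma}, why the derivative of a correction is controlled by the supremum norm (not the derivative) of its predecessor. A secondary inaccuracy: the increment $C_l=C_{l-1}+1$ comes from summing the geometric series of corrections $\sum_k\delta u^{\rho,k+1,l}$ (which is bounded by $1$), not primarily from integrating the differentiated quadratic source as you describe, though the calibration $\rho_l C_{l-1}=1/(4C^*_n)$ plays the role you assign to it in keeping the constant $C^*_n$ uniform in $l$.
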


\begin{proof}
Recall that $T=1$. First we start with the scheme of the functional series (\ref{funci}) and prove its strong convergence with respect to the norm (\ref{norm}) 
on the domain $[0,\rho_1 ]\times {\mathbb R}^n=[0,T_1]\times {\mathbb R}^n$ in original time coordinates (corresponding to the domain $[0,1]\times {\mathbb R}^n$ in transformed time coordinates (i.e., $\tau$). Note that $\rho(\tau)=\rho_1$ on this domain.

Since the $n$ equations (for $1\leq i\leq n$) in (\ref{scalparasystlin10}) are $n$ identical scalar equations and the initial data and first order coefficient functions $h_i$ are smooth and have bounded derivatives, classical results tell us that a unique smooth solution  in $C^{\infty}$, i.e., $(\tau,x)\rightarrow {\bf u}^{\rho,1,0}=(u_1^{\rho, 1,0},\cdots ,u_n^{\rho,1,0})$ of the linear parabolic equation (\ref{scalparasystlin10}) exists which is bounded with bounded derivatives where all the $u^{\rho,1,0}_i$ are equal entries. Hence, by the maximum principle   (cf. part (i) of lemma \ref{lemma} below) for $C_0:=\max_{i\in \left\lbrace 1,\cdots,n\right\rbrace} \sup_{(\tau,x)\in Q_1}|h_i|>0$ we have
\begin{equation}
\max_{i\in \left\lbrace 1,\cdots ,n\right\rbrace} |u^{\rho,1,0}_i|_{0}\leq C_0.
\end{equation}
Moreover, according to part (ii) and part (iii) of lemma \ref{lemma} there exist a constant $C^*>0$ such that 
\begin{equation}
\sup_{(\tau,x)\in Q_1}{\Big |}\frac{\partial}{\partial x_i}u^{\rho,1,0}_j(\tau,x){\Big |}_0\leq C^*C_0,
\end{equation}
\begin{equation}
\sup_{(\tau,x)\in Q_1}{\Big |}\frac{\partial}{\partial t}u^{1,0}_j(\tau,x){\Big |}_0\leq C^*C_0,
\end{equation}
(note the original $t$-variable here), and for all
\begin{equation}
\sup_{(\tau,x)\in Q_1}{\Big |}\frac{\partial^2}{\partial x_i\partial x_m}u^{\rho,1,0}_j(\tau,x){\Big |}_0\leq C^*C_0.
\end{equation}
Define $C^*_n=\left(2+n+n^2\right)C^*C^*C^*$ for convenience. Choose
\begin{equation}
\rho_1:=\frac{1}{4C_0C^*_n},
\end{equation}
and assume inductively that
\begin{equation}\label{indscal}
\max_{i\in \left\lbrace 1,\cdots ,n\right\rbrace} {\Big |}\delta u^{\rho,k,0}_i{\Big |}_{0}\leq \frac{1}{2^k}|{\bf \delta u}^{\rho,1,0}|_0
\end{equation}
\begin{equation}\label{indder}
{\big |}\delta {\bf u}^{\rho,k,0}{\big |}_{0,1}:=\max_{i,j\in \left\lbrace 1,\cdots ,n\right\rbrace} {\Big |}\frac{\partial}{\partial x_j}\delta u^{\rho,k,0}_i{\Big |}_{0}\leq \max_{1\leq j\leq n}\frac{1}{2^k}{\Big |}\frac{\partial}{\partial x_j}{\bf \delta u}^{\rho,1,0}{\Big |}_0
\end{equation}
First we verfiy that the relations (\ref{indscal}) and (\ref{indder}) are inherited at step $k+1$. We have
\begin{equation}
\begin{array}{ll}
|{\bf \delta u}^{\rho, k+1,0}|_{0}\leq \rho_1|{\bf \delta u}^{\rho, k,0}|_{0}\left( C^* C_0+\sum_{m=2}^{k}{\Big |}\frac{\partial \delta {\bf u}^{\rho, m,0}}{\partial x_j}{\Big |}_{0}\right)\\
\\
\leq |{\bf \delta u}^{\rho, k,0}|_{0}\left( \rho_1C^* C_0+\rho_1 {\Big |}\frac{\partial \delta {\bf u}^{\rho, 1,0}}{\partial x_j}{\Big |}_{0}\right)\\
\\
\leq |{\bf \delta u}^{\rho, k,0}|_{0}\left( \frac{1}{4}+\rho_1 C^*C_0\right)\\
\\
\leq \frac{1}{2}|{\bf \delta u}^{\rho, k,0}|_{0}\leq \frac{1}{2^{k+1}}|{\bf \delta u}^{\rho, 1,0}|_{0}.
\end{array}
\end{equation}
In order to close the induction we  need estimates of the first order derivatives because these are involved in the source terms on the right side of the recursively defined equations. We shall use the fundamental solution. In the following we understand the term 'fundamental representation'  in the following sense: if $(t,x)\rightarrow f(t,x)$ is a function on a domain $[0,T]\times {\mathbb R}^n$ and $(t,x;s,y)\rightarrow p(t,x;s,y)$ is the fundamental solution of a parabolic equation then we call the function
\begin{equation}
(t,x)\rightarrow (f\ast p)(t,x):=\int_0^t\int_{{\mathbb R}^n}f(s,y)p(t,x;s,y)dsdy
\end{equation}
 a fundamental representation (we have classical convolution if $p$ is a normal density and is dependent on  $x-y$ only). The solution $\delta u^{\rho,2,0}_i$ (for $1\leq i\leq n$) and its first order spatial derivatives can be represented by fundamental representations involving the source term of (\ref{deltaurhok0}) and the fundamental solution of (\ref{scalparasystlin10}),  or the respective derivatives of this fundamental solution. Hence, according to lemma \ref{lemma} and remark \ref{remark*} below we get
\begin{equation}\label{est0}
\begin{array}{ll}
{\big |}\delta {\bf u}^{\rho,2,0}_i{\big |}_{1}=\max_{i,m\in \left\lbrace 1,\cdots ,n\right\rbrace} {\Big |}\frac{\partial}{\partial x_m}\delta u^{\rho,2,0}_i{\Big |}_{0}\\
\\
\leq \rho_1\max_{i\in \left\lbrace 1,\cdots ,n\right\rbrace}\sum_{j=1}^n {\Big |}\left( \delta u^{\rho,1,0}_j\frac{\partial  u^{\rho,1,0}_i}{\partial x_j}\right) \ast\frac{\partial}{\partial x_m}\Gamma^{\rho,1}_1{\Big |}_{0}\\
\\
\leq  \rho_1C_0nC^*C^*|{\bf \delta u}^{\rho,1,0}|_0\leq \rho_1C^*_n|{\bf \delta u}^{\rho,1,0}|_0
\end{array}
\end{equation}
where $\Gamma^{\rho,1}_1$  denotes the fundamental solution of (\ref{scalparasystlin10}) (which is identical for each $i$). Here we use lemma \ref{lemma} again. Note that we have $|{\bf \delta u}^{\rho,1,0}|_0$ on the right side of the latter estimate and not the derivative. Next we assume inductively that for $m\leq k$ we have
\begin{equation}
{\big |}\delta {\bf u}^{\rho,m,0}_i{\big |}_{1}\leq  \left( \rho_1C^*_nC_0\right)^{m-1}|{\bf \delta u}^{\rho,1,0}|_0.
\end{equation}
Then we get (a rough estimate is sufficient)
\begin{equation}
\begin{array}{ll}
|{\bf \delta u}^{\rho, k+1,0}|_{1}\leq \rho_1|{\bf \delta u}^{\rho, k,0}|_{0}\left( {\Big |}\frac{\partial \delta {\bf u}^{\rho, 1,0}}{\partial x_j}{\Big |}_0 +\sum_{m=2}^{k}{\Big |}\frac{\partial \delta {\bf u}^{\rho, m,0}}{\partial x_j}{\Big |}_{0}\right)\\
\\
\leq |{\bf \delta u}^{\rho, k,0}|_{0}\left( \rho_1C^* C_0+\rho_1 \left( \frac{\rho_1C^*_nC_0}{1-\rho_1C^*_nC_0}\right) \right)\\
\\
\leq |{\bf \delta u}^{\rho, k,0}|_{0}\left( \frac{1}{4}+\rho_1\cdot \frac{1}{3}\right)\\
\\
\leq \frac{1}{2}|{\bf \delta u}^{\rho, k,0}|_{0}\leq \frac{1}{2^{k+1}}|{\bf \delta u}^{\rho, 1,0}|_{0}.
\end{array}
\end{equation}

Note that
\begin{equation}
|\delta u^{\rho,1,0}_j|_0= |u^{\rho,1,0}_j-h_j|_0={\Big |}\int_0^{\rho_1}\sup_{s,x}\left( {\Big |}\frac{\partial}{\partial t}u^{1,0}_j(s,x){\Big |}\right) ds{\Big |}_0\leq C^*_nC_0\rho_1\leq \frac{1}{4}.
\end{equation}
Hence with the choice
\begin{equation}
\rho_1=\frac{1}{4C^*_nC_0}
\end{equation}
the functional series (\ref{funci}) is absolutely and uniformly convergent with respect to the norm $|.|_{0,1}$. Note that the relation (17) concerning the decay of the initial data at spatial infinity is preserved for the all functions $u^{\rho,k,0}_i$. Applying lemma \ref{lemma} once more by a similar argument we get the absolute and uniform convergence of the first time and the spatial derivatives of second order. Hence, the first order derivative with respect to time of the series (\ref{funci}) can be computed componentwise, i.e.
\begin{equation}\label{funcit}
\frac{\partial}{\partial t}u^{\rho ,0}_i=\frac{\partial}{\partial t}u^{\rho,1,0}_i+\sum_k \frac{\partial}{\partial t}\delta u^{\rho, k+1,0}_i, 1\leq i\leq n,
\end{equation}
and the same is true for the first and second order derivatives with respect to the spatial variables, i.e. for all $1\leq l\leq n$ we have
\begin{equation}\label{funcixl}
\frac{\partial}{\partial x_l}u^{\rho ,0}_i=\frac{\partial}{\partial x_l}u^{\rho,1,0}_i+\sum_k \frac{\partial}{\partial x_l}\delta u^{\rho, k+1,0}_i, 1\leq i\leq n,
\end{equation}
and for all $1\leq l,m\leq n$ we have
\begin{equation}\label{funcixl}
\frac{\partial^2}{\partial x_l\partial x_m}u^{\rho ,0}_i=\frac{\partial^2}{\partial x_l\partial x_m}u^{\rho,1,0}_i+\sum_k \frac{\partial^2}{\partial x_l\partial x_m}\delta u^{\rho, k+1,0}_i, 1\leq i\leq n.
\end{equation}
Using these relations we observe that the function ${\bf u}^{\rho,1}$ defined in terms of the series in (\ref{funci}) satisfies the multivariate Burgers equation in a classical pointwise sense. Hence, we have constructed a solution on $\left[0,T_1\right] \times{\mathbb R}^n$ in original time coordinates (reps. on $\left[0,T_1\right] \times{\mathbb R}^n$ in $\tau$- coordinates). On this domain we have
\begin{equation}\label{funci*}
|u^{\rho ,1 }_i|_{0}=|u^{\rho,1,0}_i+\sum_{k\geq 1} \delta u^{\rho, k+1,0}_i|_{0}\leq C_0+1,
\end{equation}
and
\begin{equation}\label{funci*}
|u^{\rho ,1 }_i|_{1,2}=|u^{\rho,1,0}_i+\sum_{k\geq 1} \delta u^{\rho, k+1,0}_i|_{1,2}\leq C^*_n\left( C_0+1\right) .
\end{equation}
Using this estimate we get a lower bound for the next time step. We define
\begin{equation}
\rho_2=\frac{1}{4C^*_nC_1}=\frac{1}{4C^*_n(C_0+1)}.
\end{equation}
As it turns out this choice controls the first order coefficient in the equations of the second time step such that $C^*_n$ is again a uniform estimate for the convolution with the fundamental solutions $\Gamma^{\rho,k}_1$. Let us observe this inductively and more closely.
Assume inductively that we have defined $C_{l-1}$ (up to some $l\geq 1$), and 
\begin{equation}\label{rhol}
\rho_l=\frac{1}{4C^*_n(C_{l-1}+1)}=\frac{1}{4C^*_n(C_{0}+l-1)},
\end{equation}
and that for $l-1\geq 1$ the function ${\bf u}^{\rho ,l-1 }$ has been constructed on the domain $[(l-2),(l-1)]\times {\mathbb R}^n$ (corresponding to the domain $[T_{l-2},T_{l-1}]\times {\mathbb R}^n$ in original time coordinates). We want to construct the solution ${\bf u}^{\rho ,l }$ on $[(l-1),l]\times {\mathbb R}^n$ (corresponding to the domain $[T_{l-1},T_{l}]\times {\mathbb R}^n$ in original coordinates) via the series
\begin{equation}\label{funcik}
u^{\rho ,l }_i=u^{\rho,1,l}_i+\sum_{k\geq 1} \delta u^{\rho, k+1,l}_i
\end{equation}
for $1\leq i\leq n$.

First, we consider the equation (\ref{scalparasystlin1*}) (recall that $T=1$). Similar as in the first time step we observe that the $n$ equations (for $1\leq i\leq n$) in (\ref{scalparasystlin1*}) are $n$ identical scalar equations and the initial data and first order coefficient functions $u^{\rho,1,l}_j$ are smooth and bounded. Classical results tell us that a unique smooth solution (i.e. in $C^{\infty}$) $(\tau,x)\rightarrow {\bf u}^{\rho,1,l}=(u_1^{\rho, 1,l},\cdots ,u_n^{\rho,1,l})$ exists which is bounded with bounded first order time derivative and bounded spatial derivatives up to second order, and which has equal entries. From the previous step we have an estimate of the first order coefficients in (\ref{scalparasystlin1*}), i.e.
\begin{equation}
\rho_l\max_{i\in \left\lbrace 1,\cdots ,n\right\rbrace} |u^{\rho,1,l-1}_i|_{1,2}\leq \rho_lC^*_nC_l\leq \frac{1}{4}.
\end{equation}
The choice of $\rho_l$ implies that we can choose the same constant $C^*_n$ as before (applying lemma \ref{lemma} as described in reamrk \ref{remark*} below) in order to get
\begin{equation}
\max_{i\in \left\lbrace 1,\cdots ,n\right\rbrace} |u^{1,l}_i|_{1,2}\leq C^*_nC_l.
\end{equation}
Applying lemma \ref{lemma} we get
\begin{equation}\label{est}
\begin{array}{ll}
 |{\bf \delta u}^{\rho,2,l}|_{1,2}\leq \rho_l\max_{i\in \left\lbrace 1,\cdots ,n\right\rbrace}\sum_{j=1}^n {\Big |}\delta u^{\rho,1,l}_j\frac{\partial  u^{\rho,1,l}_i}{\partial x_j}\ast\Gamma^l_1{\Big |}_{1,2}\\
\\
\leq \rho_lC^*_nC_l|{\bf \delta u}^{\rho,1,l}|_{0}
\end{array}
\end{equation}
where $\Gamma^{\rho,l}_1$  denotes the fundamental solution of (\ref{scalparasystlin1*}) (which is identical for each $i$) and the constant $C^*_n$ can be chosen the same as in the previous time steps (recall uniform upper bound for the first order coefficients implied by choice of time step size $\rho_l$). For the choice of $C^*_n$ we use again lemma \ref{lemma} as described in remark \ref{remark*}. 
We may assume w.l.o.g. that $C_l\geq 1$. Now assume inductively (we look at inductive substeps now) that for $2\leq m \leq k$ we have
\begin{equation}\label{induct}
\max_{i\in \left\lbrace 1,\cdots ,n\right\rbrace} |\delta u^{\rho,k,l}_i|_{1,2}
\leq  \left(\rho_lC_l C^*_{n}\right)^{m-1}|{\bf \delta u}^{\rho,1,l}|_{0}.
\end{equation}
Then we get
\begin{equation}
\begin{array}{ll}
|{\bf \delta u}^{\rho, k+1,l}|_{1,2}\leq \rho_l|{\bf \delta u}^{\rho, k,l}|_{0}\left( C^*_nC_l +\sum_{m=2}^{k}{\Big |}\frac{\partial \delta {\bf u}^{\rho, m,l}}{\partial x_j}{\Big |}_{0}\right)\\
\\
\leq |{\bf \delta u}^{\rho, k,0}|_{0}\left( \frac{1}{4}+\rho_1 C^*_nC_0\right)\\
\\
\leq \frac{1}{2}|{\bf \delta u}^{\rho, k,0}|_{0}\leq \frac{1}{2^{k+1}}|{\bf \delta u}^{\rho, 1,0}|_{0}.
\end{array}
\end{equation}
Here we observe inductively that $C^*_n$ is a uniform constant for estimates of the convolution with $\Gamma^{\rho, l}_k$ and its first order derivatives with respect to time and first and second order derivatives with respect to space.
%
Hence we have convergence of the functional series (\ref{funcik})  to the norm $|.|_{1,2}$. Again, uniform and absolute convergence of derivatives (first order with respect to time and up to second order with respect to space) with polynomial decay at spatial infinity leads to the conclusion that the functional series (\ref{funcik}) can be derived term by term. Hence, the first order derivative with respect to time of the series (\ref{funcik}) can be computed componentwise, i.e.
\begin{equation}\label{funcikt}
\frac{\partial}{\partial t}u^{\rho ,l}_i=\frac{\partial}{\partial t}u^{\rho,1,l}_i+\sum_k \frac{\partial}{\partial t}\delta u^{\rho, k+1,l}_i, 1\leq i\leq n,
\end{equation}
and the same is true for the first and second order derivatives with respect to the spatial variables, i.e. for all $1\leq l\leq n$ we have
\begin{equation}\label{funcixl}
\frac{\partial}{\partial x_l}u^{\rho ,l}_i=\frac{\partial}{\partial x_l}u^{\rho,1,l}_i+\sum_k \frac{\partial}{\partial x_l}\delta u^{\rho, k+1,l}_i, 1\leq i\leq n,
\end{equation}
and for all $1\leq l,m\leq n$ we have
\begin{equation}\label{funcixl}
\frac{\partial^2}{\partial x_l\partial x_m}u^{\rho ,l}_i=\frac{\partial^2}{\partial x_l\partial x_m}u^{\rho,1,l}_i+\sum_k \frac{\partial^2}{\partial x_l\partial x_m}\delta u^{\rho, k+1,l}_i, 1\leq i\leq n,
\end{equation}
Using these relations we observe that the function ${\bf u}^{\rho,l}$ defined in terms of the series in (\ref{funcik}) satisfies the multivariate Burgers equation in a classical poiintwise sense. Hence, we have constructed a solution on $\left[0,T_l\right] \times{\mathbb R}^n$. On this domain we have
\begin{equation}\label{funci*}
|u^{\rho ,l}_i|_{1,2}=|u^{\rho,1,l}_i+\sum_{k\geq 1} \delta u^{\rho, k+1,l}_i|_{1,2}\leq C_l+\sum_{k\geq 1}\frac{2\rho_lC^*_nC_l}{1-2\rho_lC^*_nC_l} \leq C_l+1 . 
\end{equation}
Using this estimate we get a lower bound for the next time step. We define
\begin{equation}
\rho_{l+1}=\frac{1}{4C^*_n(C_l+1)}
\end{equation}

Hence, the sequence $(\rho_l)$ satisfies
\begin{equation}
\rho_{l+1}\geq \frac{1}{4C^*(C_0+l)}
\end{equation}
such that $\sum_{l=1}^{N} \rho_l\uparrow \infty$ as $N\uparrow \infty$. This implies that the scheme is global in time.

\end{proof}

In the following we state the lemma used above in order to get the uniform constant $C^*_n$ which depends only on the dimension $n$ and the H\"{o}lder norm of the first order coefficients of the subproblems. This H\"{o}lder norm is uniformly bounded due to the choice of the 'time step size' $\rho_l$ (especially, it is independent of the substep $k$ and independent of the time step $l$). The independence of $l$ is in part due to the fact that (with the choice of $\rho_l$ and $C_l$ in the theorem) we have $\rho_l C_l\leq C$ for some constant $C$ independent of $l$. Recall that $C_l$ is the $|.|_{0}$-bound of the Cauchy problem of the $l$th time step. The independence of the substep number $k$ is due to the fact that there is a uniform bound of the first order coefficients for each subproblem of the iteration.The constant $C^*_n$ is determined according to the following lemma (in the remark below we explain in further detail how it is used for convenience of the reader). 
We need the following facts: at each time step $l$ we solve the multivariate Burgers equation on the domain $\left[T_{l-1},T_l\right]\times {\mathbb R}^n$ (original domain) . The final data of the time step $l$ are  the initial data of the time step $l+1$ {\it and} figure as the first order coefficient functions of the first substep problem at time step $l+1$. For this reason we have to ensure  that the initial data of the next time step are H\"{o}lder continuous (in order to get a classical solution for the linear problem of the first substep of time step $l+1$). Note that we encounter linear parabolic equations in substeps. In the $k$th sunstep of time step $l+1$ the corection terms $\delta u^{\rho ,k,l+1},~k\geq 1$ have a fundamental representation with source terms. In order to estimate these representations we need the source term to be bounded continuous. Since the source terms involve the first derivative of the solution of the last time step $l$, we need bounded continuity of the first derivative to be uniformly controlled (independent of the time step). The following lemma and remark summarize the results needed and how they are applied in our situation. 

\begin{lem}\label{lemma}
Let $T>0$ be some horizon. Consider the equation (recall that $0<\nu<\infty$ is some constant)
\begin{equation}\label{parabolcauchy**}
\frac{\partial u}{\partial t}= \nu\Delta u
+\sum_{i=1}^nb_i(t,x)\frac{\partial u}{\partial x_i}+g
\end{equation}
along with the initial condition
\begin{equation}\label{initialf}
u(0,x)=f(x)
\end{equation}
on the domain $D:=[0,T]\times {\mathbb R}^n$. Then we have:
\begin{itemize}
 \item[(i)] Assume that $u$ is bounded and continuous, that the time derivative $u_t$ exists for any $t\in (0,T)$, and that for any $t\in [0,T]$ the spatial derivatives $u_{x_i}$ and $u_{x_ix_j}$ exist and are continuous. Furthermore assume that the coefficients $b_i$ are bounded on $D$ for $1\leq i\leq n$, and that $u$ satisfies the equations (\ref{parabolcauchy**}) and (\ref{initialf}). Then we have
 \begin{equation}
 |u|_0\leq T|g|_0+|f|_0.
 \end{equation}
 \item[(ii)] Assume that $T=1$. Let $g\equiv 0$, and assume that $|f|_{1}\leq C^1_f$, and $|b|_{0}\leq C^0_b$, $|b|_{0,1}\leq C^1_b$ holds for finite constants $C^1_f, C^0_b,C^1_b$.  Then the Cauchy problem (\ref{parabolcauchy**}), (\ref{initialf}) has a unique classical solution $u\in C^{1,2}_b$, and for some constant $C^*_1>0$ (which depends on dimension $n$ and on $\nu$ only) we have
 $$
 |u|_{0,1}\leq \left(1+C^0_bC^*_1\right)|f|_1 
 $$
 \item[(iii)] Let $g\equiv 0$, and assume that $|f|_{2}\leq C_f$, and $|b|_{0,1}\leq C^1_b$ $|b|_{1,2}\leq C^2_b$ holds for finite constants $C_f,C^1_b,C^2_b$.  Then the Cauchy problem (\ref{parabolcauchy**}), (\ref{initialf}) has a unique classical solution $u\in C^{1,2}_b$, and for some constant $C^*_2>0$ (which depends on dimension $n$ and on $\nu$ only) we have
 $$
 |u|_{1,2}\leq \left(1+C^1_b C^*_2\right) |f|_2 
 $$
 \item[(iv)] Assume that $T=1$. Let $f\equiv 0$, and assume that 
 $|g|_{\alpha/2,\alpha}\leq C_g$, and $|b|_{1,2}\leq C_b$ holds for finite constants $C_g,C_b$.
  Then the Cauchy problem (\ref{parabolcauchy**}), (\ref{initialf}) has a unique classical solution $u\in C^{1,2}_b$, and for some constant $C^{**}>0$ (which depends on dimension $n$ and on $\nu$, and on the H\"{o}lder constant $C_g$ only) we have
 $$
 |u|_{1,2}\leq C^{**}.
 $$ 
\end{itemize}
\end{lem}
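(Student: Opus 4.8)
The plan is to treat the four parts in order of increasing difficulty, all resting on the standard theory of the linear parabolic operator $Lu:=u_t-\nu\Delta u-\sum_{i=1}^n b_i u_{x_i}$ and the Gaussian bounds for its fundamental solution $\Gamma$ produced by the classical Levi (parametrix) construction already invoked in the paper.

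For part (i) I would argue by the parabolic maximum principle on $\mathbb{R}^n$ adapted to \emph{bounded} solutions. Fix $\epsilon>0$ and take the barrier $B_\epsilon(t,x)=\epsilon e^{\mu t}(1+|x|^2)$. A direct computation gives $LB_\epsilon=\epsilon e^{\mu t}\big(\mu(1+|x|^2)-2n\nu-2\sum_i b_i x_i\big)$, and using $2\sum_i b_i x_i\le \mu|x|^2+n(C^0_b)^2/\mu$ together with $|b|_0\le C^0_b$ one picks $\mu=\mu(n,\nu,C^0_b)$ large enough that $LB_\epsilon\ge 0$; thus $B_\epsilon$ is a supersolution tending to $+\infty$ as $|x|\to\infty$. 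Comparing $u$ with $\pm(|f|_0+t|g|_0)+B_\epsilon$, the difference has nonpositive initial values, nonpositive $L$-image, and (because $u$ is bounded) tends to $-\infty$ spatially, so its maximum is attained in a compact set and the classical maximum principle forces it to be $\le 0$. Letting $\epsilon\downarrow 0$ yields $|u|_0\le |f|_0+T|g|_0$.

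For parts (ii) and (iii) the natural route is Duhamel's formula against the pure heat semigroup $G(t)=e^{\nu t\Delta}$, treating the drift as a source: $u(t)=G(t)f+\int_0^t G(t-s)\big[\sum_i b_i u_{x_i}(s)\big]\,ds$. Differentiating and using the two elementary facts $\partial_x^\alpha G(t)\phi=G(t)\partial_x^\alpha\phi$ (so $|\partial_x^\alpha G(t)\phi|_0\le|\partial_x^\alpha\phi|_0$) and $|\nabla G(t)\psi|_0\le C_n(\nu t)^{-1/2}|\psi|_0$, I obtain for (ii) the Volterra inequality $|\nabla u(t)|_0\le |f|_1+C_n\nu^{-1/2}C^0_b\int_0^t(t-s)^{-1/2}|\nabla u(s)|_0\,ds$; since the kernel $(t-s)^{-1/2}$ is weakly singular with integral $\le 2$ on $[0,1]$, a generalized (Mittag--Leffler type) Gronwall lemma closes this into $|u|_{0,1}\le(1+C^0_b C^*_1)|f|_1$ with $C^*_1=C^*_1(n,\nu)$. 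Equivalently one may write $\nabla u=\int \partial_x\Gamma(t,x;0,y)\,(f(y)-f(x))\,dy$, using $\int\partial_x\Gamma\,dy=0$ and $|f(y)-f(x)|\le|f|_1|x-y|$, and check that the Gaussian bounds make the powers of $t$ cancel so the estimate stays uniform down to $t=0$. Part (iii) is the same scheme one derivative higher: from $\nabla^2u(t)=G(t)\nabla^2 f+\int_0^t\nabla G(t-s)\,\nabla[\sum_i b_i u_{x_i}(s)]\,ds$ and $\nabla[\sum_i b_i u_{x_i}]=\sum_i(\nabla b_i)u_{x_i}+\sum_i b_i\nabla u_{x_i}$, the first group is controlled by $C^1_b$ and the already-estimated $|u|_{0,1}$ from (ii), while the second again produces a weakly singular Volterra term in $|\nabla^2u|$ closed by Gronwall; finally $|u_t|_0\le\nu n|\nabla^2u|_0+C^1_b|u|_{0,1}$ from the equation itself, giving $|u|_{1,2}\le(1+C^1_b C^*_2)|f|_2$.

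Part (iv) is where the real work lies and is the main obstacle: with $f\equiv 0$ one has $u(t,x)=\int_0^t\!\int\Gamma(t,x;s,y)g(s,y)\,dy\,ds$, and the bound on $\nabla^2u$ is exactly a parabolic interior Schauder estimate. The singularity $(t-s)^{-(n+2)/2}$ of $\partial_x^2\Gamma$ is not integrable on its own; the standard remedy is to use $\int\partial_x^2\Gamma(t,x;s,y)\,dy=0$ to write $\partial_x^2u=\int_0^t\!\int\partial_x^2\Gamma\,[g(s,y)-g(s,x)]\,dy\,ds$ plus lower-order commutator terms, after which the Hölder bound $|g(s,y)-g(s,x)|\le C_g|x-y|^\alpha$ improves the time singularity to $(t-s)^{-1+\alpha/2}$, which is integrable over $[0,t]$ and yields $|\nabla^2u|_0\le C^{**}$; the time derivative is then recovered from the equation. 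Carrying this out rigorously requires the full Levi parametrix machinery (existence of $\Gamma$ and the derivative Gaussian bounds, with constants governed by $n$, $\nu$ and the Hölder norm of $b$), and the careful splitting of the singular integral exploiting the Hölder cancellation is the delicate point; the bookkeeping that keeps $C^*_1,C^*_2$ dependent on $n,\nu$ only — the drift contribution being factored out explicitly into $C^0_b,C^1_b$ — also has to be verified at each step.
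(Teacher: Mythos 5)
Your proposal is correct in substance, but for parts (ii) and (iii) it follows a genuinely different route from the paper. The paper's proof constructs the fundamental solution $p=N+\int N\phi$ by the Levi parametrix method, observes that $LN=\sum_j b_jN_{x_j}$ (the time derivative cancels the Laplacian term), and then integrates by parts in the representation $u(t,x)=\int f(y)p(t,x;0,y)\,dy$ so that the spatial derivatives land on $f$ rather than on the kernel; the bounds then come from the standard weak-singularity estimates for $N^*$ and $N^*_{x_j}$ and summation of the iterated-kernel series. You instead write Duhamel's formula against the pure heat semigroup, treat the drift as a source, and close a weakly singular Volterra inequality by a Mittag--Leffler-type Gronwall lemma. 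Both are legitimate; your route avoids the parametrix bookkeeping, while the paper's keeps the dependence on $b$ visible term by term in the series. One point deserves care in either approach: the singular Gronwall closure naturally produces a constant that grows like a Mittag--Leffler function of $C^0_b$ (effectively $\exp(c\,(C^0_b)^2)$ for large $C^0_b$), not the stated linear form $(1+C^0_bC^*_1)$ with $C^*_1$ depending on $(n,\nu)$ only; the paper's own series has the same feature, since the iterated kernels carry higher powers of $|b|_0$, and the linear form is really only achieved after the smallness of the effective drift is enforced (which is exactly what the choice of $\rho_l$ in Theorem 2.5 and Remark 2.6 arranges, via the geometric-series bound $1+C\rho_l/(1-\rho_lC^0_b)$). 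So you should either state your conclusion with the Mittag--Leffler constant or add the smallness hypothesis under which it collapses to the linear form. For parts (i) and (iv) you are actually more explicit than the paper, which disposes of (i) with the words ``follows from the maximum principle'' and of (iv) with a citation to Friedman: your quadratic barrier $\epsilon e^{\mu t}(1+|x|^2)$ is the correct way to justify the maximum principle for bounded solutions on all of ${\mathbb R}^n$, and your description of the Schauder-type cancellation $\int\partial^2_x\Gamma\,dy=0$ combined with the H\"older continuity of $g$ to render the time singularity integrable is precisely the standard argument the citation points to.
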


\begin{proof}
Assertion (i) follows from the maximum principle. Next we prove assertion (ii). Since $\nu>0$ is constant the $b_i$ are H\"{o}lder continuous with respect to the spatial variables uniformly in $t$ the fundamental solution $p$ 
of (\ref{parabolcauchy**}) exists. Since $f\in C^2_b$ classical theory tells us  that a unique classical solution $u$ exists on $[0,T]\times {\mathbb R}^n$, and  (in case $g\equiv 0$) $u$ has the representation
\begin{equation}
u(t,x)=\int_{{\mathbb R}^n}f(y)p(t,x;0,y)dy.
\end{equation}
Recall that the fundamental solution $p$ in the Levy expansion form is given by
\begin{equation}
p(t,x;s,y):=N(t,x;s,y)+\int_s^t\int_{{\mathbb R}^n}N(t,x;\sigma,\xi)\phi(\sigma,\xi;s,y)d\sigma d\xi,
\end{equation}
where
\begin{equation}
N(t,x;s,y)=\frac{1}{\sqrt{4\pi \nu t}^n}\exp\left(-\frac{|x-y|^2}{4\nu t} \right),
\end{equation}
and $\phi$ is a recursively defined function which is H\"{o}lder continous in $x$, i.e.,
\begin{equation}
\phi(t,x;s,y)=\sum_{m=1}^{\infty}(LN)_m(t,x;s,y),
\end{equation}
along with the recursion
\begin{equation}
\begin{array}{ll}
(LN)_1(t,x;s,y)=LN(t,x;s,y)\\
\\
(LN)_{m+1}:=\int_s^t\int_{\Omega}LN(t,x;\sigma,\xi)\left( LN\right)_m(\sigma,\xi;s,y)d\sigma d\xi.
\end{array}
\end{equation}
Now, since the time derivative applied to $N$ cancels out with the Laplacian applied to $N$ we have
\begin{equation}
LN=\sum_{j=1}^nb_j\frac{\partial N}{\partial x_j}.
\end{equation}
 Since $\nu$ is a constant the $N$ depends on $x-y$ and $t-s$. Therefore we write $N^*(t-s,x-y):=N(t,x;s,y)$.
 Hence, for the spatial derivatives of first order we get
\begin{equation}\label{firstder}
\begin{array}{ll}
\int_{{\mathbb R}^n}f(y)p_{x_j}(t,x;0,y)dy=\int_{{\mathbb R}^n}f(y)N^*_{x_j}(t,x-y;0)dy\\
\\
+\int_{{\mathbb R}^n}\int_0^t\int_{{\mathbb R}^n}N^*_{x_j}(t,x-\xi;\sigma)\phi(\sigma,\xi;0,y)f(y)d\sigma d\xi dy\\
\\
=-\int_{{\mathbb R}^n}f_{x_j}(y)N^*(t,x-y;0)dy\\
\\
-\int_{{\mathbb R}^n}\int_0^t\int_{{\mathbb R}^n}N^*_{x_j}(t,x-\xi;\sigma)\Phi(\sigma,\xi;0,y)f_{x_j}(y)d\sigma d\xi dy
\end{array}
\end{equation} 
by partial integration and change of order of integration.
Here 
\begin{equation}
\Phi(t,x;s,y)=\sum_{m=1}^{\infty}(KN^*)_m(t,x;s,y),
\end{equation}
with 
\begin{equation}
\begin{array}{ll}
(KN^*)_1(t,x;s,y)=KN(t,x;s,y)\\
\\
(KN^*)_{m+1}:=\int_s^t\int_{\Omega}\left( LN^*\right)_m(t,x;\sigma,\xi)\left( KN^*\right)_1(\sigma,\xi;s,y)d\sigma d\xi.
\end{array}
\end{equation}
Here,
\begin{equation}
KN(t,x;s,y)=\sum_{j=1}^nb_j(t,y)N^*(t-s,x-y).
\end{equation}
and $\left( LN^*\right)_m$ is defined as above.
The estimate of part (ii) of lemma \ref{lemma} then follows from (\ref{firstder}) and 
the standard estimates
\begin{equation}
\begin{array}{ll}
N^*(t-s,x-y)\leq \frac{C}{t^{\mu_0}\left(x-y\right)^{n-2\mu_0}},\\
\\
N^*_{x_j}(t-s,x-y)\leq \frac{C}{t^{\mu_1}\left(x-y\right)^{n+1-2\mu_1}},
\end{array}
\end{equation}
where $0<\mu_0<1$, and $1/2<\mu_1<1$.
Similarly, for the spatial derivatives of second order we get
\begin{equation}\label{secondder}
\begin{array}{ll}
\int_{{\mathbb R}^n}f_{x_jx_m}(y)N^*(t,x-y;0)dy\\
\\
+\int_{{\mathbb R}^n}\int_0^t\int_{{\mathbb R}^n}N^*_{x_j}(t,x-\xi;\sigma)\Phi^*_0(\sigma,\xi;0,y)f_{x_j}(y)d\sigma d\xi dy\\
\\
+\int_{{\mathbb R}^n}\int_0^t\int_{{\mathbb R}^n}N^*_{x_j}(t,x-\xi;\sigma)\Phi^*_1(\sigma,\xi;0,y)f_{x_jx_m}(y)d\sigma d\xi dy.
\end{array}
\end{equation} 
by partial integration and change of order of integration.
Here 
\begin{equation}
\Phi^*_0(t,x;s,y)=\sum_{m=1}^{\infty}(K^*N^*)_m(t,x;s,y),
\end{equation} 
 \begin{equation}
\begin{array}{ll}
(K^*N^*)_1(t,x;s,y)=&K^*_0N(t,x;s,y)\\
\\
(K^*N^*)_{m+1}(t,x;s,y):=&\int_s^t\int_{\Omega}K^*_1(t,x;\rho,\eta)\left( LN^*\right)_m(\rho,\eta,x;\sigma,\xi)\times \\
\\
&\left( K^*_0N^*\right)_1(\sigma,\xi;s,y)d\sigma d\xi d\eta d\rho,
\end{array}
\end{equation}
where 
\begin{equation}
K^*_1(t,x;\rho,\eta)=\sum_i\left( \frac{\partial}{\partial x_j}b_i(\rho,\eta)\right) N(t-\rho,x-\eta),
\end{equation}
and
\begin{equation}
\Phi^*_1=2\Phi.
\end{equation}
It is worthwile to consider the lowest order term of the latter Levy-type expansion. Iterated partial integration leads to the representation
\begin{equation}\label{secondderlow}
\begin{array}{ll}
u(t,x)_{x_lx_m}(t,x)=\int_{{\mathbb R}^n}f_{x_jx_m}(y)N^*(t,x-y;0)dy\\
\\
+\int_{{\mathbb R}^n}\int_0^t\int_{{\mathbb R}^n}N^*_{x_l}(t,x-z;\sigma){\Big (}\left(\sum_j \left( \frac{\partial}{\partial x_m}b_j(t,x)N(\sigma ,z-y)\right)f_{x_j}(y)\right) +\\
\\
\left(2\sum_j \left( b_j(t,x)N(\sigma,z-y)\right)\right) f_{x_jx_m}(y){\Big)} d\sigma d\xi dy\\
\\
+\mbox{ terms with weaker singularity integrands }
\end{array}
\end{equation} 
Note that representations of the first order time derivative can be expressed in terms of spatial derivative up to second order. Hence
the estimate in part (iii) of lemma \ref{lemma} follows from the representation in equation (\ref{secondder}) with lowest order term of form (\ref{secondderlow}). Finally the estimate of part (iv) is standard (cf. \cite{F2}).

\end{proof}

\begin{rem}\label{remark*}
At each time step the lemma \ref{lemma} is applied with $T=1$ in transformed $\tau$ coordinates. 
Assume that $l$ time steps have been performed leading to a solution $u^{\rho,l}$ which is defined on the time interval $[0,l]$ with respect to transformed coordinates $\tau$. Then part (i) of lemma \ref{lemma} (i), i.e. the maximum priciple is applied in order to show that the supremum norm of the function $u^{\rho,1,l+1}$ is bounded by the supremum of the initial data of the $(l+1)th$ time step, i.e. the quantity $|u^{\rho,l}(l,.)|_0$. Then part (i) of lemma \ref{lemma} with zero initial data and recursively defined right side is applied recursively. The choice of the time step size $\rho_l$ ensures convergence of the functional series with the elements $\delta u^{\rho,k,l+1}_i$ for $k\geq 2$ with respect to the supremum norm. In order to construct a classical solution we need estimates of the first and second spatial derivatives and of the first time derivatives. In order to estimate  $|u^{\rho,1,l+1}|_{0,1}$ part (ii) of lemma \ref{lemma} is applied, and in order to estimate $|u^{\rho,1,l+1}|_{1,2}$ part (iii) of lemma \ref{lemma} is applied. From the proof of part (ii) of lemma \ref{lemma} we see that the quantity  $|u^{\rho,1,l+1}|_{0,1}$ is bounded by the the quantity $|u^{\rho,l}(l,.)|_1$ plus a series involving first order coefficients $b_i$ and multiple integrals of first order coefficients $b_i$ times the quantity $|u^{\rho,l}(l,.)|_1$. Since the first order coefficients have a multiplier $\rho_l$ from the representation in the proof of lemma \ref{lemma} we observe that the choice of $\rho_l$ of form $\frac{1}{4C^*_nC_l}$ with some $C^*_n$ depending only on $\nu$ and the dimension $n$ (which makes the first order coefficient of the Cauchy problem for $u^{\rho,1,l+1}$ small) ensures that we have the bound $|u^{\rho,1,l+1}|_{0,1}\leq |u^{\rho,l}(l,.)|_1\left(1+C\frac{\rho_l}{1-\rho_lC^0_b} \right) $ for some constant $C>0$. Note that powers of $\rho_l$ come from the representation of the higher order terms in Levy expansion and are estimated here in form of a geometric series.  Similarly, from the proof of part (iii) of lemma \ref{lemma} we see that the quantity  $|u^{\rho,1,l+1}|_{1,2}$ is bounded by the the quantity $|u^{\rho,l}(l,.)|_2$ plus a series involving first order coefficients $b_i$ and first order derivatives of $b_i$ and multiple integrals of first order coefficients $b_i$ and its first order derivatives times the quantity $|u^{\rho,l}(l,.)|_2$. This leads to a bound $|u^{\rho,1,l+1}|_{1,2}\leq |u^{\rho,l}(l,.)|_2\left(1+\tilde{C}\frac{\rho_l}{1-\rho_lC^1_b}\right) $ for some constants $C_b$ and $\tilde{C}>0$. Next part (iv) of lemma \ref{lemma} is used in order to estimate the 'correction terms' $\delta u_i^{\rho,k,l+1}$ with respct to the norm $|.|_{0,1}$ and $|.|_{1,2}$. Note that we may choose $C^*_n$ dependent only on $\nu$ and the dimension $n$ large enough such that the bounds are independent of the time step $l$.
\end{rem}

\section{Further remarks on uniqueness and regularity}

Uniqueness is a consequence of a global existence result for certain semilinear systems (cf. also proposition 8.6 below).
\begin{thm}\label{unique}
Let $\Omega={\mathbb R}^n$ or $\Omega={\mathbb T}^n$. There is a unique globally bounded and H\"older continuous classical solution of the Cauchy problem  (\ref{qparasyst1}) of the multivariate Burgers equation.
\end{thm}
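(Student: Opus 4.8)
The existence half is already delivered by Theorem \ref{burgglob}, so the real content is uniqueness. The plan is to compare two solutions through the equation satisfied by their difference and to close a Gronwall argument in the supremum norm, which is the norm naturally paired with the hypothesis that the solutions are merely \emph{bounded} (rather than $L^2$). Suppose $\mathbf{u}$ and $\mathbf{v}$ are two globally bounded, H\"older continuous classical solutions of (\ref{qparasyst1}) with the same data $\mathbf{h}$, and set $\mathbf{w}=\mathbf{u}-\mathbf{v}$. Writing the nonlinearity as a telescoping difference,
\[
\sum_j u_j\partial_{x_j}u_i-\sum_j v_j\partial_{x_j}v_i=\sum_j u_j\partial_{x_j}w_i+\sum_j w_j\partial_{x_j}v_i,
\]
one checks that $\mathbf{w}$ solves the linear system
\[
\frac{\partial w_i}{\partial t}=\nu\Delta w_i-\sum_j u_j\frac{\partial w_i}{\partial x_j}-\sum_j w_j\frac{\partial v_i}{\partial x_j},\qquad \mathbf{w}(0,\cdot)=0 .
\]
The coefficients here are bounded: the $u_j$ are bounded by hypothesis, while the zeroth-order coefficients $\partial_{x_j}v_i$ are bounded because interior parabolic (Schauder) estimates applied to the bounded classical solution $\mathbf{v}$ control its first spatial derivatives by $|\mathbf{v}|_0$ and the data, in the same spirit as parts (ii)--(iii) of Lemma \ref{lemma}.

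Next I would fix $i$ and treat the scalar operator $\partial_t-\nu\Delta+\sum_j u_j\partial_{x_j}$, whose drift $u_j$ is bounded and H\"older continuous in space, uniformly in $t$. By the Levy/parametrix construction already used in the proof of Lemma \ref{lemma} this operator has a fundamental solution $p_i(t,x;s,y)\ge 0$ satisfying Aronson-type Gaussian bounds, so that $\sup_{t,x}\int_\Omega p_i(t,x;s,y)\,dy\le C$ uniformly. Since $\mathbf{w}$ is a bounded classical solution with zero initial data, Duhamel's principle gives
\[
w_i(t,x)=-\int_0^t\!\!\int_\Omega\sum_j w_j(s,y)\,\frac{\partial v_i}{\partial x_j}(s,y)\,p_i(t,x;s,y)\,dy\,ds .
\]
Setting $M(t):=\max_i\sup_{s\le t,\,x}|w_i(s,x)|$ and using the uniform bounds on $\|\nabla \mathbf{v}\|_0$ and on $\int_\Omega p_i\,dy$ yields $|w_i(t,x)|\le C\int_0^t M(s)\,ds$, whence $M(t)\le C\int_0^tM(s)\,ds$ with $M(0)=0$. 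Gronwall's inequality forces $M\equiv0$ on every finite horizon, so $\mathbf{u}=\mathbf{v}$; since the horizon is arbitrary, this is global uniqueness.

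The argument covers both cases uniformly: on $\Omega={\mathbb T}^n$ one replaces $p_i$ by its periodization and every integration by parts is boundary-free, whereas on $\Omega={\mathbb R}^n$ the Gaussian bounds furnish the needed decay. I expect the main obstacle to lie in the regularity bookkeeping rather than in the Gronwall step: one must verify that a merely bounded H\"older continuous classical solution actually has bounded first spatial derivatives (so that $\partial_{x_j}v_i$ is an admissible bounded coefficient) and that the drift $u_j$ is regular enough for the parametrix and its Gaussian bounds to apply. If one is willing to invoke the polynomial decay (\ref{growthh}) enjoyed by the solution constructed in Theorem \ref{burgglob}, a shorter alternative is an $L^2$ energy estimate: testing the $\mathbf{w}$-equation against $\mathbf{w}$, discarding the nonpositive term $-\nu\sum_i\int|\nabla w_i|^2$, and bounding the convective and coupling terms by $|\mathrm{div}\,\mathbf{u}|_0$ and $\|\nabla\mathbf{v}\|_0$ gives $\frac{d}{dt}\|\mathbf{w}\|_{L^2}^2\le C\|\mathbf{w}\|_{L^2}^2$, and the same conclusion follows.
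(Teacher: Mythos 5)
Your proposal is correct in substance and uses the same starting move as the paper: subtract the two solutions, telescope the nonlinearity, and observe that $\mathbf{w}=\mathbf{u}-\mathbf{v}$ solves a \emph{linear} parabolic system with drift $u_j$, zeroth-order coefficients $\partial_{x_j}v_i$, and zero initial data. Where you diverge is in how you kill that linear system. The paper does not argue directly: it observes that the system for $\delta\mathbf{v}$ is a special case of the reaction-diffusion systems treated in \cite{BS} and invokes uniqueness of the global classical solution from Proposition 2.3 there, so that $\delta\mathbf{v}\equiv 0$ because the zero function is one solution. You instead give a self-contained proof: represent $w_i$ by Duhamel's formula against the fundamental solution of the scalar operator $\partial_t-\nu\Delta+\sum_j u_j\partial_{x_j}$ (built by the same Levy parametrix used in Lemma \ref{lemma}), use the uniform bound on $\int p_i\,dy$, and close with Gronwall in the supremum norm. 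Your route is more elementary and keeps the argument internal to the paper's toolbox; the paper's route is shorter but outsources the crux. Two remarks. First, your Duhamel representation of a merely bounded classical solution implicitly uses a Phragm\'en--Lindel\"of/maximum-principle uniqueness statement for the scalar linear equation with bounded coefficients; that is standard but should be said. Second, both arguments need $\partial_{x_j}v_i$ to be a \emph{bounded} coefficient, which does not follow from ``globally bounded and H\"older continuous'' alone; you flag this honestly and propose interior Schauder/Bernstein estimates to supply it, whereas the paper passes over it when asserting that hypotheses (2.9)--(2.13) of \cite{BS} are met. Your $L^2$ alternative is fine but, as you note, only yields uniqueness in the decaying class (\ref{growthh}) and additionally needs $\operatorname{div}\mathbf{u}$ bounded, so it is weaker than the stated theorem; the sup-norm Gronwall argument is the one to keep.
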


\begin{proof}
 Let ${\bf v}^1$ and ${\bf v}^2$ be two globally bounded and H\"older continuous solutions of the Cauchy problem (\ref{qparasyst1}). The difference ${\bf \delta v}:={\bf v}^1-{\bf v}^2$ with components $\delta v_i:= v^1_i-v^2_i$  satisfies
\begin{equation}\label{unique}
\left\lbrace \begin{array}{ll}
\frac{\partial \delta v_i}{\partial t}-\sum_{j=1}^n \frac{\partial^2 \delta v_i}{\partial x_j^2} 
+\sum_{j=1}^n v^1_j\frac{\partial \delta v_i}{\partial x_j}+\sum_{j=1}^n \delta v_j\frac{\partial  v^2_i}{\partial x_j}=0,\\
\\
\mathbf{\delta v}={\bf 0},
\end{array}\right.
\end{equation}
Since $v_i^1$ and $v_i^2$ are given and H\"older continuous, we have a semilinear system which is a special case of the linear system (2.8) described in (\cite{BS}). Since $v_i^1,v_i^2$ are globally bounded and H\"older continuous the assumptions (2.9)-(2.13) of proposition 2.3 in (\cite{BS}) are satisfied in case $\Omega ={\mathbb R}^n$ and according to proposition (2.3) there is a unique global classical solution $\delta v\in C^{1,2}_b\left( [0,T)\times {\mathbb R}^n,{\mathbb R}^n\right) $. Since $\mathbf{\delta v}={\bf 0}$ is a solution of (\ref{unique}), we have ${\bf v}^1={\bf v}^2$. The result can be transferred to the case $\Omega={\mathbb T}^n$ easily. 
\end{proof}


\begin{thebibliography}{19}
\baselineskip=12pt

\bibitem{BS}
{\sc Becherer, D., Schweizer, M.},  {\em Classical solutions to reaction-diffusion systems for hedging problems with interacting It\^{o} and point processes}, Ann. Appl. Probab. Volume 15, Number 2 (2005), 1111-1144. 

\bibitem{BKS}
{\sc Belomestny, D., Kampen, J., Schoenmakers, J.G.M.},  {\em Holomorphic transforms with applications to affine processes}, Journal of Functional Analysis, 2009; 257 (4), 1222-1250.

%
%




%

\bibitem{F2}
{\sc Friedman, A.} {\em  Partial Differential Equations of Parabolic Type}, Dover Pubn. Inc., 2008. 

\bibitem{FrKa}
{\sc Fries, Christian; Kampen, J\"org}: 
{Proxy Simulation Schemes for generic robust Monte Carlo sensitivities, process oriented importance sampling and high accuracy drift approximation (with applications to the LIBOR market model)}, Journal of Computational Finance, Vol. 10, Nr. 2, 97-128, 2007.

\bibitem{FK}
{\sc Fries, C., Kampen, J.} {\em On a class of semi-elliptic diffusion models. Part I: a constructive analytical approach for global existence, densities, and numerical schemes (with applications to the Libor market model)}
 arXiv:1002.5031 (February 2010)




\bibitem{GN}
{\sc  Goldys, B., Neklyudov, M.} {\em Beal-Kato-Majda type condition for Burgers equation}, Journal of Mathematical analysis and Applications, 2009. 



\bibitem{KKS}
{\sc Kampen, J., Kolodko, A., Schoenmakers, J.}, {\em Monte Carlo Greeks for financial products via approximative transition densities}, Siam J. Sc. Comp., vol. 31 , p. 1-22, 2008.

\bibitem{K}
{\sc Kampen, J.,} {\em Global regularity and probabilistic schemes for free boundary surfaces of multivariate American derivatives and their Greeks},  Siam J. Appl. Math. 71, pp. 288-308.

%
%



\bibitem{K3a}
{\sc Kampen, J.,} {\em Constructive analysis of the Navier Stokes equation}, (arXiv July 2010 , in revision).




\bibitem{Kale}
	{\sc Kampen, J\"org}: {\em On local analytic expansions of the densities in the context of (micro)-hypoelliptic and classes of semi-elliptic equations.} (2010)



\bibitem{Kat}
{\sc Kato, T.}, {\em Quasi-linear equations of evolution, with applications to partial differential equations},
 in W. Everitt (ed.) Spectral theory and differential equations, LNM nr. 448, Springer-Verlag, 1974.


\bibitem{Kr} 
{Krylov, N.V.}, {\em Lectures on Elliptic and Parabolic Equations in H\"older Spaces,} Graduate Studies in Mathematics, Vol. 12, American Mathematical Society, 1996.

%
%




\bibitem{L}
{\sc Leray, J.} {\em Sur le Mouvement d'un Liquide Visquex Emplissent l'Espace}, Acta Math. J. (63), 193-248, (1934).

\bibitem{Lie}
{\sc Lieberman, J.,}, {\em The first initial-boundary value problem for quasilinear second order parabolic equations,},  Ann. Sc. Norm. Sup. Pisa 13(1986), 347-387.


%
%
%
%




\bibitem{R}
{\sc Rothe, F.} {\em Global Solutions of reaction-diffusion systems}, LNM nr. 1072, Springer Verlag, 1984.







%
%
%








 \end{thebibliography}
\end{document}